\documentclass[11pt,oneside]{article}
\usepackage{authblk}
\usepackage{amsthm, amsmath, amssymb, bbm}
\usepackage{graphicx, wrapfig, caption, subcaption}
\graphicspath{ {./img/} }
\allowdisplaybreaks

\usepackage[dvipsnames]{xcolor}

\def\E{\mathbb{E}}
\def\R{\mathbb{R}}
\def\N{\mathbb{N}}
\def\Q{\mathbb{Q}}

\def\P{\mathbb{P}}
\def\llfloor{\left\lfloor}
\def\rrfloor{\right\rfloor}
\newcommand*\dd{\mathop{}\!\mathrm{d}}
\allowdisplaybreaks

\newtheorem{theorem}{Theorem}[section]
\newtheorem{definition}{Definition}[section]

\newtheorem{remark}{Remark}[section]
\newtheorem{lemma}{Lemma}[section]

\title{Stability in stochastic hypergraph matching III: general reneging}
\author{Doan Dai Nguyen, Ana Bu\v{s}i\'{c}}
\affil{Inria and DI ENS, École Normale Supérieure\\PSL University, Paris, France}

\begin{document}
	\maketitle
	\begin{abstract}
		In many real-life matching problems, waiting agents might abandon before being matched, such as patients deceasing before receiving organs, passengers/drivers cancelling ride requests, or raw materials/intermediary products degrading in production lines. This poses the need for incorporating reneging in stochastic matching models.
		
		In this work, we consider matching models on hypergraphs with batch arrivals and general-weight matchings. Since our model allows fractional weights, we may not be able to talk about individual items, and thus reneging is not required to be independent between items of the same class. For the simplicity sake's, we assume items arrive at discrete time.
		
		We show that stability depends on the exact nature of reneging, in stark contrast with the non-reneging case where it depends on the arrivals only through the arrival rates. To our best knowledge, this is the first such sensitivity result in stochastic matching.
		
		We uncover a new stabilising mechanism which exists neither in the non-reneging case nor in the graph case, which explains why incorporating reneging in stochastic matching is not straightforward. Together with balancing mechanism as hinted in the online assignment framework for the non-reneging case, it gives a criterion necessary and sufficient for stability.
		
		Finally, whilst verifying stability is a hard problem, we give a family of MaxWeight-type policies parameterised by $\varepsilon > 0$, which are maximally stabilising for all $\varepsilon$ sufficiently small. Unfortunately there is no effective bound for $\varepsilon$, but we show how to adjust its value during implementation.
	\end{abstract}
	
	\section{Introduction}

This article is the third in a series on necessary and sufficient criteria for the problem of stochastic matching on hypergraphs. In the first article \cite{Nguyen2026}, we have studied the problem in the classical setting, characterised by a hypergraph $G = (V, E)$ and a distribution $\mu$ on $V$. At time $t \in \N$, a class $v \in V$ is drawn i.i.d. according to $\mu$, and a class-$v$ item arrives in the buffer. The arrived items then may be combined according to some hyperedge $e \in E$, which we call a matching.

This model was generalised in the second article \cite{Nguyen2026a}, where multiple items of multiple classes may arrive at the same time, which can happen at any moment $t \in \R_{\geq 0}$. Moreover, a matching now may require multiple items of a given class; the weights of a class $v$ for a matching $e$ may be negative, which we interpret as each type-$e$ matching generates new class-$v$ items. Negative weights and continuous time allow describing a richer class of systems, including assemble-to-order and production line with ingredients coming in both as discrete batches and continuous flows.

This article extends the model to account for reneging: items in the buffer may abandon the system either while waiting to be matched.

\subsection{Motivation}

Such phenomenon arises in many applications context, such as organ donation, production lines, or home swaps.

In kidney multi-way exchanges, patient-donor pairs may leave the queue before being matched, but they might also leave \textit{after} being matched but before the exchange takes place. Such cancellations may result from medical or psychological reasons, voluntary withdrawal, or the availability of alternative transplantation options \cite{DeKlerk2010, Verissimo2022}.

In assemble-to-order systems, there might be impatient demand and/or perishable goods. In this setting, a matching can be interpreted as a collection of raw materials required to produce a batch of goods together with the customers who are to receive that batch. If any of the raw materials perishes before the matching is realised, for example due to the time it takes to assemble the materials and/or to process, the batch cannot be produced. Similarly, if any customer abandons their demand after being matched, the remaining customers may no longer be able to receive the batch. Such interactions between perishable inventory and customer impatience have been studied in production and inventory systems \cite{Ioannidis2013}.

In ride-pooling, several passengers may be jointly matched to a vehicle. Such a match may cease to be feasible before the ride takes place if one of the passengers cancels or abandons the service either before or after being matched, for instance because of excessive waiting or pickup times \cite{Bujak2026}. The departure of one passenger may in turn invalidate the match for the remaining passengers, since the pooled ride may no longer satisfy the relevant routing or cost constraints.

In barter exchange, multiple parties may agree to exchange their goods with one another, with the exchange taking place only if all parties remain willing to participate. Consequently, the exchange as a whole may fail if one or more participants subsequently withdraw from the agreement \cite{Gupta2019}.

As in many matching models, the primary goal in designing matching policies is to ensure stability, in the sense that no agents wait indefinitely. While stability in the classical setting is well understood, the possibility of reneging introduces an additional challenge: a matching may fail when any one of its constituent agents abandons the system. Consequently, even agents that do not renege may experience increased waiting times, and it is therefore unclear how stability can be guaranteed.

\subsection{Literature review}

In contrast with queueing theory where reneging has been well-analysed \cite{Singh2022}, this topic remains comparatively understudied in stochastic matching theory. As remarked by Aveklouris et al. \cite{Aveklouris2024}, at least in stochastic matching on bipartite graphs, there are much fewer works with reneging, most of which assume reneging times to be either deterministic or exponential distributed (see Aveklouris et al. \cite{Aveklouris2024} and the references therein). Only very few assume more general reneging distributions, which studied fluid and diffusion limits \cite{Aveklouris2024, Ding2021}, leaving the question of stability criteria unanswered.

To ensure stability without a complete characterisation, many existing works studied some small specific bipartite graphs, notably $K_{1, 1}$ \cite{Buke2017, Kohlenberg2025, Boxma2011, Liu2019}, which corresponds to double-ended queues, $K_{1, 2}$ \cite{Kanoria2017}, $K_{2, 2}$ \cite{Afeche2014}, or $N$-system \cite{Castro2020}, on which exact computations, such as finding stationary distributions, are feasible. Other works on general bipartite matching models assumed that all classes renege \cite{Blanchet2022, Zenios1999, Aveklouris2024} or focused on finite horizon \cite{Khademi2021}.

Jonckheere et al. \cite{Jonckheere2023} later gave a necessary and sufficient criterion for stability in stochastic matching on general graphs, where some classes may not renege. Under exponential reneging times, their condition \cite[Theorem 3.2]{Jonckheere2023} resembles and generalises \texttt{NCOND} in the non-reneging case \cite[Theorem 2]{Mairesse2016}. However, stability in the non-reneging case is also characterised linear algebraically \cite[Proposition 7 and 8]{Comte2021}. Finding such a criterion for the reneging case, as well as establishing stability under more general distributions for reneging times, remains an open problem even for stochastic matching on graphs.

Beyond graphs, Zubeldia et al. characterised stability for a particular example of stochastic matching on hypergraphs with exponential reneging time \cite[Theorem 1]{Zubeldia2026}. Notably, one of the case they considered, called $Y$-topology, is a matching model of one unique hyperedge with three classes, one of which is non-reneging and the other two are reneging.

\subsection{Contributions and organisation}

The goal of this work is to bridge this gap we have identified above. Specifically, we consider the weighted stochastic matching model with some (possibly all) item classes being impatient, meaning that they will leave the queue once their waiting time runs out. Regarding the reneging time distribution, we only require that if a class is reneging, the reneging time has finite mean.

Then, we extend the framework of assignment and reassignment to our new setting. Whilst much of the proof carry over, the generalisation is not direct. One of the main difficulties is that with reneging, the drift of Lyapunov function from the non-reneging case \cite[Function $L_+$]{Nguyen2026a} depends on the queue length and not just the assignment/reassignment rate. Exploring the reasons in depth leads to the conclusion that the idea of assignment and reassignment alone is not sufficient, and one must think about \textit{how} the matchings amongst items assigned to the same matching type should be done in order to guarantee stability.

To this end, we introduce a new technique, whereby we enlarge the buffer state description with a matching process, which is essentially a counter acting as a proxy to the number of matchings available. Before, the dynamics of this process is automatically given by the queue-length process under the assumption that we always make as many matchings as possible using items assigned to the same matching type; now, we require the control of this process manually at all times. In exchange for a somewhat more elaborate system description, we have a better control of the dynamics, which eventually leads to necessary and sufficient criteria generalising those for the non-reneging case, similar to how in the case of graphs, the criterion given by Jonckheere et al. \cite[Theorem 3.2]{Jonckheere2023} generalises \texttt{NCOND} \cite[Theorem 2]{Mairesse2016}. Consequently, much of the proofs are also different: in particular, for the proof of sufficiency, new Lyapunov function involving the matching process is designed.

The outline of the paper is as follow:
\begin{itemize}
	\item In Section \ref{section: preliminaries}, we formally define our model. As in the previous paper, we allow batch arrivals with some mild assumption. For simplicity, we assume that the weights are non-negative and that the discrete-event setting, where the events - arrivals, matchings, abandonments - only occur at discrete time $t \in \N$.
	
	\item In Section \ref{section: a case study}, we review the case of $Y$-topology as studied by Zubeldia et al. \cite{Zubeldia2026}, where we characterise the stability region. This reveals a new stabilising mechanism which does not exist in the non-reneging case or with graphs, and explains why generalising the result in the graph case by Jonckheere et al. \cite{Jonckheere2023} is not straightforward, even with the online assignment framework.
	
	\item In Section \ref{section: necessity}, we combine the insight from the case study with the mechanism of managing imbalance within hyperedges as presented in the non-reneging case \cite[Condition (1)]{Nguyen2026a}, to devise a stability criterion for the reneging case (Condition \eqref{eq: ncond}). We also show that it is necessary for stability (Lemma \ref{lemma: necessity of ncond}).
	
	\item However, in trying to show the sufficiency of Condition \eqref{eq: ncond} by generalising the proof from the case study in Section \ref{section: a case study}, we encounter a term in the Lyapunov function, whose drift is not easily analysable.
	
	To this end, we amend the online assignment framework by introducing the notion of matching process, which serves as a proxy for this difficult term, and whose drift is more amenable to analysis.
	
	\item Once the notion is developed, Condition \eqref{eq: ncond} is shown to be sufficient for stability (Lemma \ref{lemma: sufficiency of ncond}), which is the subject of Section \ref{section: sufficiency}.
	
	As a corollary, we obtain a family of MaxWeight-type policies indexed by a parameter $\varepsilon > 0$, which are maximally stable for all $\varepsilon$ sufficiently small. Unfortunately, a priori there is no effective upper bound, but we outline strategies to adjust $\varepsilon$ as a policy is implemented.
	
	\item Using the notion of generalised matching types \cite[Section 6]{Nguyen2026a}, we generalise our stability criterion to matching models of arbitrary weights (Theorem \ref{theorem: necessity and sufficiency of ncond for general-weight case}).
	
	\item  Finally, Section \ref{section: conclusion} concludes with some remarks.
\end{itemize}
	\section{Preliminaries}
\label{section: preliminaries}

\subsection{Matching models}

We recall the setting from the previous paper \cite{Nguyen2026a}, where we have a weighted matching hypergraph $G = (V, E)$ on a finite set $V$ of classes. The hyperedges $e \in E$ are the matching types with weights: specifically, the weight of vertex $u$ in the hyperedge $e$ is denoted by $e_u$. Unless specified otherwise, we assume hereinafter that $e_u \geq 0$, and we write $u \in e$ whenever $e_u > 0$ to mean that the hyperedge $e$ contains the vertex $u$. Denote $|e| = |\{u \mid u \in e\}|$ and $\|e\|_1 = \sum_{u \in e} e_u$. 

By abuse of notation, we see $G$ both as a subset of $V \times E$, which allows us to write $(u, e) \in G$ whenever $u \in e$ and $e \in E$, and also as a matrix in $\R^{V \times E}$ where the entry $G_{u, e}$ is the weight $e_u$. As with $e$, we denote by $|G|$ the size of $G$ as a subset of $V \times E$, and $\|G\|_1$ its $\ell^1$-norm as a matrix in $\R^{V \times E}$.

To this, we add a discrete-time arrival process characterised by a distribution $\mu$ on $\R_{\geq 0}^V \setminus \{0\}$ and a buffer $Z = (Z_v(t))_{v \in V}$, where $Z_v(t)$ denotes the number of class-$v$ items in the buffer at time $t$, waiting to be matched. For convenience, let
\[
	\mu_v = \int_{\R^V} k_v \dd \mu(k),
\]
as well as
\[
	\| \mu\|_1 = \sum_{v \in V} \mu_u = \int_{\R^V} \|k\| \dd \mu(k),
\]
which we assume to be finite. Note that by definition, $\mu_v \geq 0$, so this implies that $\mu_v$ is also finite for all $v \in V$. Similar to the previous paper, we also denote
\[
	\mu_v^2 = \int_{\R^V} k_v^2 \dd \mu(k),
\]
and
\[
	\|\mu\|_2^2 = \sum_{v \in V} \mu_v^2 = \int_{\R^V} \| k \|^2 \dd \mu(k),
\]
which we also assume to be finite.

In addition to this, we also have reneging for each class $v \in V$. More specifically, we have $R = \left(R_v (t)\right)_{v \in V, t \in \N}$ a sequence of independent random variables on $[0, 1]^V$ of mean $\E[R(t)] = \alpha$.

\begin{remark}
	Note that the random variables $R_v(t)$ for $t \in \N$ are \emph{not} necessarily identically distributed. As we will see in Remark \ref{remark: independent reneging} below, even in the simplest setting, this is not the case.
\end{remark}

A realisation of the matching model is as follow: we start with some initial buffer $Z(0)$. At time $t$, an arriving batch $k = k(t)$ is drawn i.i.d. according to $\mu$, and for each $v$, there are $k_v$ class-$v$ items arriving. The current quantity of class-$v$ items is now $Z_v(t) + k_v$, amongst which $R_v(t) (Z_v(t) + k_v)$ abandon the queue without being matched. Thus on average, the percentage of reneging class-$v$ item at each period is $\alpha_v$, and in particular, the case $\alpha_v = 0$ corresponds to class-$v$ items not reneging.

In particular, if items of some class $v$ reneging, then it is necessary that there are newly arriving items, for otherwise, the number of items will eventually be too small for any matchings of type $e \ni v$ involving class-$v$ items to occur. We thus assume that if $\alpha_v > 0$ then $\mu_v > 0$.

The buffer state before matchings but after reneging is given by $B(t) = (B_v(t))_{v \in V}$ where $B_v(t) = \left(1 - R_v(t)\right) (Z_v(t) + k_v)$.

A collection of matchings $m$ can be expressed as a linear combination of hyperedges $m = \sum_{e \in E} m_e e$, where the number of class-$v$ items used is given by $m_v = \sum_{v \in V} m_e e_v$. We only consider integral matchings, and in particular we require that $m \in \N^E$; note that $m = 0$ corresponds to doing nothing, which is also allowed even when there are feasible matchings. Such a collection $m$ is feasible if the buffer may allow it, meaning we have $m \leq B(t)$. If it is realised, then the resulting buffer state at time $t + 1$ is given by
\[
	Z(t+1) = B(t) - m.
\]

\begin{remark}
	\label{remark: independent reneging}
	To understand the motivation behind our choice of modelling reneging, it is fruitful to revisit the classical setting where only one items arrive at at time and the matching model is unweighted. In this case, the number $Z_v(t)$ of class-$v$ items in the buffer at time $t$ is an integer, and one may talk about individual item as an unit.
	
	Assuming the reneging time to be exponentially distributed with parameter $\frac{1}{\alpha_v}$ - or, what amounts to the same upon rounding, to be geometrically distributed with parameter $\alpha_v$ - at time $t$, each class-$v$ item abandons with probability $\alpha_v$ independently and identically. $R_v(t)$ simply denotes the percentage of abandoning class-$v$ items; in this case, $R_v(t)$ follows the normalised binomial distribution $\frac{1}{Z_v(t) + k_v} B(Z_v(t) + k_v, \alpha_v)$.
	
	As such, our modelling of reneging generalises that in the unweighted case. The need for this stems from the fact that since the quantity of class-$v$ items can be non-integer, we may no longer talk about them as units.
\end{remark}


\subsection{Matching policies}

It may occur that for a given buffer state after reneging but before matching $B(t)$, there can be multiple collection of feasible matchings possible, and we have to choose one by specifying a matching policy $\Phi = (\Phi_t)_{t \in \N}$. Naturally, we require $\Phi$ to be non-anticipative, so in particular, it may rely only on the information available at time $t$, which are the arrival thus far, the decisions it has made up to time $t-1$, and possibly some independent source of randomness for randomised policies.

Formally speaking, let $U = (U(t))$ be a sequence of independent random variables, and $\mathcal{H}_t = \sigma\left(\left\{k(t'), U(t') \mid t' \leq t\right\}\right)$ be the Borel $\sigma$-algebra generated by the arrivals $(k(t'))_{t' \leq t}$ up to time $t$ and $(U(t'))_{t' \leq t}$, then $\mathcal{H}$ is a filtration, wither we require $\Phi$ to be adapted.

We say that $\Phi_t$ to be Markovian if it depends only on $Z(t)$, $k(t)$, and $U(t)$. If furthermore, $U(t)$'s are identically distributed, then $\Phi_t$ has no notion of time $t$, and we say that $\Phi$ is stationary.

Regardless of the properties of $\Phi$, as in the usual setting, the goal in designing $\Phi$ is to ensure stability. A policy $\Phi$ induces a queue-length process $Z$. Following the previous paper, we say that $\Phi$ stabilises $(G, \mu, R)$ if at any point, the expected time before the buffer becomes ``small'' is finite. Formally speaking, this means there exists a compact set $C \subset \R^V_{\geq 0}$ such that for all $T \geq 0$, we have
\[
	\E\left[\min\{t \geq 0 \mid Z(T + t) \in C\} \mid \mathcal{H}_T\right] \leq \infty.
\]

\begin{remark}
	Compared with the previous paper, we ignore the order of arrivals and implicitly assume that if items of some class are used, they will be chosen in the order of arrival. Similar to the remark we have made upon defining continuous-time matching models \cite[Remark 8.1]{Nguyen2026a}, the notion of stability and all the analysis hereinafter use only the queue-length process $Z$ and not the order of arrivals.
	
	Every matching policy induces such a process, even though it needs not be a characterisation, meaning that multiple polices may induce the same process. However, as we will see, there are size-based matching policies of maximum stability region, so this does not incur any loss of generality.
\end{remark}

\subsection{Supports, assignment, assignment rates, and online assignment policies}

The definition of supports, assignment, and assignment rates remain the same, which we recall for the sake of convenience. However, due to reneging, the behaviour of online assignment polices is a bit different, which we define rigorously for the sake of clarity. 

\begin{definition}
	\label{definition: G-support}
	A set $U \subset G$ is called a $G$-support if for all $e \in E$, there exists a vertex $v$ such that $(v, e) \in G \setminus U$. In particular, $U$ does not contain any hyperedge $e$ entirely.
\end{definition}

\begin{definition}
	By assigning some class-$v$ items to a matching type $e \ni v$, we mean that these items then can only be matched by a type-$e$ matching. In particular, even if there are other available matching types using class $v$, these items will still not be matched.
\end{definition}

\begin{definition}
	Given an arrival rate $\mu$, we call a tuple $\nu = (\nu_{v, e})_{(v, e) \in G} \in \R_{\geq 0}^G$ an assignment rate if for all $v \in V$, we have $\sum_{e \ni v} \nu_{v, e} = \mu_v$.
\end{definition}

To describe the behaviour of an online assignment policy, we first enlarge the buffer into $X = (X_{v, e}(t))_{(v, e) \in G}$, with one queue $X_{v, e}$ for each $(v, e) \in G$. Moreover, for each $e \ni v$ and $t \in \N$, we have a random variable $R_{v, e}(t)$ independent from other random variables and identically distributed as $R_v(t)$. By abuse of notation, we also denote this family of random variables as $R = \left(R_{v, e}(t)\right)_{(v, e) \in G, t \in \N}$.

A realisation of the matching model under an online assignment policy is as follow: first we assign the initial buffer $Z(0)$ into $X(0)$ in some manner.

At time $t$, a batch $k = k(t)$ is drawn i.i.d according to $\mu$. For $v \in V$, there is a batch of $k_v$ class-$v$ items arriving. The policy $\Phi$ chooses an assignment rate $\nu = \nu(t)$, and assign this batch to a matching type $e \ni v$ with probability $\frac{\nu_{v,e}}{\mu_e}$. The current buffer now can be denoted by $X'(t)$ where $X'_{v, e}(t) = X_{v, e}(t) + k_v$ if the batch of $k_v$ class-$v$ items is assigned to $e$, and $X'_{v, e}(t) = X_{v, e}(t)$ otherwise.

Then the reneging happens: for each $(v, e)$, amongst the class-$v$ items assigned to $e$, a percentage of $R_{v, e}(t)$ of them renege, leaving being only $B_{v, e}(t) = \left(1 - R_{v, e}(t)\right)X'_{v, e}(t)$ items.

Finally, for each matching type $e$, we form type-$e$ matchings using items assigned to $e$ in the order of arrivals, until no more matchings can be made. Formally, let $M_e(t) = \min_{v \in e} \llfloor \frac{B_{v, e}(t)}{e_v} \rrfloor$, then $X_{v, e}(t+1) = B_{v, e}(t) - e_v M_e(t)$.

\begin{remark}
	Note that we let the reneging happen \textit{after} the assignment but \textit{before} the matchings. Compared with our definition of the behaviour of a general policy, this is still consistent. In particular, for a general policy, the reneging happens only after the newly arriving items are admitted into the buffer.
	
	We could have let the reneging to happen before the assignment, but that would complicate the description and bring no further generality than what we have defined above.
\end{remark}

Similar to the non-reneging case, we say that $\Phi$ stabilises $(G, \mu, R)$ if there exists a compact set $C \subset \R_{\geq 0}^G$ such that for all $T \geq 0$, we have
\[
	\E\left[\min\{t \geq 0 \mid X(T + t) \in C\} \mid \mathcal{H}_T\right] < \infty.
\]
Note that this definition of stability is equivalent to the one using process $Z$ in the previous Subsection.

	\section{A case study: $Y$-topology}
\label{section: a case study}

In this Section, we study in depth the case of $Y$-topology as considered by Zubeldia et al. \cite{Zubeldia2026}. The stability is characterised \cite[Theorem 1]{Zubeldia2026}, but as a case study, it brings some fruitful intuition for the later analysis of more general cases.

Here, we consider a hypergraph $G$ of one single hyperedge $e$ containing three vertices, which we will denote by $u$, $v$, and $w$.
\begin{itemize}
	\item Amongst these classes, class-$u$ items do not renege, whilst class-$v$ and -$w$ items do, so in particular, we have $\E[R_u(t)] = 0$ and $0 < \E[R_v(t)] = \alpha_v, \E[R_w(t)] = \alpha_w \leq 1$.
	\item The matching model is unweighted, meaning $e_u = e_v = e_w = 1$, and only one item arrives at a time, so $\|\mu\|_1 = \mu_u + \mu_v + \mu_w = 1$.
\end{itemize}

An advantage of this example is that since there is only one hyperedge, there is no need to specify a matching policy, as we may, without loss of generality, match items whenever they form a matching.

\subsection{Stability characterisation}

Now let us consider how matchings occur. One way to see the formation of a matching is the following:
\begin{itemize}
	\item First, we divide it into two parts, the non-reneging part $e^n$ comprising one class-$u$ item, and the reneging part $e^r$ comprising one class-$v$ and one class-$w$ item.
	\item As some point, one class-$v$ and one class-$w$ item are in the buffer, which we combine to have the reneging part.
	\item Then, at some point, before either of the items in the reneging part reneges - and thus breaks the reneging part unless there are some other items to replace - an item of class-$u$ arrives, forms the non-reneging part on its own.
	\item These two parts are then combined together to form a matching.
\end{itemize}

Splitting the matching into two parts, we may see it as a double-ended queue with reneging on one side, or even an instance of a $M/M/1$ queue where the non-reneging part and the reneging part play the role of requests and services, respectively. Indeed, in the $M/M/1$ queue, the requests do not leave the buffer before they are served, whilst a surplus of services does no harm; in our case, the unmatched reneging parts will eventually renege on their own, thus do not affect the system's stability.

Regardless of seeing as a double-ended queue or as a $M/M/1$ model, the principle of stability remains the same: in order for the system to stabilise, there needs to be a surplus of services/reneging parts. In particular, during a given unit of time, the arrival rate $\lambda_e^n = \mu_u$ of the non-reneging parts must be strictly smaller than that $\lambda_e^r$ of the reneging parts.

But what is $\lambda_e^r$?

To find an appropriate definition of $\lambda_e^r$, let us revisit the $M/M/1$-queue point of view. Suppose we have a request, there are two possibilities: either a service arrives, serves the request, and clear the buffer, or another request arrives. If the latter occurs at least as often as the former, then the queue is unstable due to the accumulating requests. Or, in other words, we need that when there is a need for services, they arrive on average faster than requests.

Back to matching model, this tells us that when there is no reneging parts in the buffer, then the rate at which we have a new reneging part, is larger than $\mu_u$. This is the definition of $\lambda_e^r$. Another way to see this is that $\frac{1}{\lambda_e^r}$ is the average waiting time before we have a new reneging part assuming that there is no reneging parts in the buffer, formally written as 
\[
	\frac{1}{\lambda_e^r} = \E\left[\min\{t > 0 \mid Z_v(t), Z_w(t) \geq 1\} \mid Z_v(t) = 0 \vee Z_w(t) = 0\right].
\]

Once we have the definition, the proof of necessity and sufficiency of the condition $\lambda_e^r > \lambda_e^n$ follows.

\begin{lemma}
	\label{lemma: stability characterisation for Y-topology}
	The condition $\lambda_e^r > \lambda_e^n$ is necessary and sufficient for the stability of $G$.
\end{lemma}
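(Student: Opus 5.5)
The plan is to reduce the $Y$-topology to a single-server-type queue in which the non-reneging class $u$ plays the role of requests and completed reneging parts $(v,w)$ play the role of services. The relevant state is $Z_u(t)$ together with the auxiliary pair $(Z_v(t),Z_w(t))$. The first observation is that whenever $Z_u(t)\ge 1$, matchings are made greedily, so $\min\{Z_v,Z_w\}=0$ after matching. In other words, whenever requests are waiting, the reneging side sits in the ``empty reneging part'' regime used to define $\lambda_e^r$. Moreover, $Z_v,Z_w$ are always stochastically dominated by a queue with arrival rate $\mu_v$ (resp.\ $\mu_w$) and reneging fraction of mean $\alpha_v>0$ (resp.\ $\alpha_w>0$). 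Such a queue is positive recurrent with bounded conditional mean, so the $(v,w)$ coordinates never cause instability. Stability is therefore equivalent to $Z_u$ returning to a bounded set in finite expected time.

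For \textbf{sufficiency}, I would work on excursions of $Z_u$ above $0$. During such an excursion, define the successive times $\tau_1<\tau_2<\dots$ at which a new reneging part forms, that is, a $v$ and a $w$ are simultaneously present; each such time consumes exactly one $u$. The defining formula for $\frac1{\lambda_e^r}$ gives $\E[\tau_{i+1}-\tau_i\mid\mathcal{H}_{\tau_i}]=\frac1{\lambda_e^r}$, since immediately after a matching one of $Z_v,Z_w$ is $0$. Here I would need the (implicit) fact that this conditional expectation does not depend on the leftover value of the nonzero coordinate, or at least that $\frac{1}{\lambda_e^r}$ is a uniform upper bound over those states. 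Meanwhile, $u$-arrivals occur at rate $\mu_u=\lambda_e^n$. I would then take the Lyapunov function $L=Z_u$ sampled at the embedded times $\tau_i$. Its increment over a cycle is (number of $u$-arrivals in $[\tau_i,\tau_{i+1})$) $-1$, whose conditional mean is $\lambda_e^n/\lambda_e^r-1<0$. A Foster--Lyapunov argument on the embedded chain, combined with Wald's identity to convert the number of cycles into elapsed time using $\E[\tau_{i+1}-\tau_i]<\infty$, then yields a finite expected return time of $Z_u$ to $\{0\}$, and hence of $Z$ to a compact set.

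For \textbf{necessity}, I would suppose $\lambda_e^r\le\lambda_e^n$ and show that the expected return time is infinite. Started from a large $Z_u(T)=N$, the same cycle decomposition applies until $Z_u$ hits $0$, so $Z_u(\tau_i)$ is a random walk with nonnegative drift and i.i.d.-in-law (or dominated) increments. In the zero-drift case, the hitting time of $0$ by such a walk has infinite mean; in the positive-drift case, it is infinite with positive probability. Wald's identity again transfers this statement from cycles to real time, since every cycle has length at least $1$. The intermediate $(v,w)$ coordinates remain tight throughout, so no compact set in the full state space is reached in finite expected time.

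The main obstacle I expect is justifying that the cycle lengths $\tau_{i+1}-\tau_i$ are genuinely i.i.d., or at least have conditional mean exactly, or uniformly bounded by, $\frac1{\lambda_e^r}$, independent of history. This is delicate because after a matching the other reneging coordinate may hold a leftover quantity, and because the $R_v(t)$ are not identically distributed. I would first try to handle this by a monotone coupling: the leftover items only speed up the next formation, so $\frac1{\lambda_e^r}$ evaluated from the worst state $Z_v=Z_w=0$ gives the upper bound needed for sufficiency. For necessity, I would argue the reverse bound from the state reached immediately after a matching, interpreting the definition of $\lambda_e^r$ as the (stationary) mean over those post-matching states.
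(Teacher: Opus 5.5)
\textbf{Gap.} The gap is exactly at the step you flag, and your repair proves the two directions for two different constants. Write $\theta(s)$ for the conditional mean of $\tau_{i+1}-\tau_i$ given the post-formation state $s=(Z_v(\tau_i),Z_w(\tau_i))$. This mean genuinely depends on $s$. From $(1,0)$, a single $w$-arrival can complete a part at the next step. From $(0,0)$ this is impossible, since only one item arrives per step. So in general $\theta(1,0)<\theta(0,0)$.

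The threshold that actually governs stability is the long-run formation rate of the saturated $(v,w)$-subsystem. That is, $1/\lambda_e^r=\E_\pi[\tau_{i+1}-\tau_i]$, where $\pi$ is the stationary law of post-formation states. By your own monotone coupling, this is strictly smaller than $\theta(0,0)$ as soon as $\pi$ charges states with a leftover.

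Your worst-case bound therefore only yields stability under $\lambda_e^n\,\theta(0,0)<1$. Your necessity half uses the $\pi$-average instead. The window $1/\theta(0,0)\le\lambda_e^n<\lambda_e^r$ is covered by neither half, so no equivalence follows.

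The necessity half also fails as stated. The increments of $Z_u(\tau_i)$ are neither i.i.d.\ nor of nonnegative conditional mean. From a state with a large leftover the next cycle is short, so $\lambda_e^n\theta(s)-1$ can be negative. No uniform lower bound on $\theta(s)$ recovers the threshold $\lambda_e^r$.

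\textbf{Fix.} What is missing is a way to average over the post-formation state rather than bound it uniformly. While $Z_u>0$, $S_i=(Z_v(\tau_i),Z_w(\tau_i))$ is an autonomous Markov chain, positive recurrent by your domination argument. Cut the cycles at its successive visits to a fixed state such as $(0,0)$ (or a regeneration set, if the state space is not discrete). This gives i.i.d.\ blocks. By Wald and renewal--reward, the mean change of $Z_u$ over a block is $\E[N](\lambda_e^n/\lambda_e^r-1)$, where $N$ is the number of formations in a block, and the same $\lambda_e^r$ appears in both directions. This restores a genuine random-walk structure for $Z_u$ sampled at block ends. Your Foster/Wald sufficiency argument and your random-walk necessity argument can then be run on it. Equivalently, you can add a Poisson-equation corrector $h(S)$ to $Z_u$.

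\textbf{Comparison with the paper.} The paper does not isolate this issue either. Its sufficiency proof takes $\E[\tau]=1/\lambda_e^r$ from the current state ``by definition'', inside a randomised multi-step Foster criterion for the joint function $C_0z_u+z_v^2/\alpha_v+z_w^2/\alpha_w$. Its necessity proof compares with a model $M'$ in which formed parts stop reneging, and uses a submartingale argument for $Z_u$.

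\textbf{Secondary gap.} A joint function of that kind is also what you need to pass from ``$Z_u$ returns to $0$'' to ``$Z$ returns to a compact set''. The pair $(Z_v,Z_w)$ may be large at the moment $Z_u$ hits $0$, and $Z_u$ can grow again while it comes down.
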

\begin{proof}
	To prove the necessity, we formally express the intuition that without this condition, the class-$u$ items will accumulate.
	
	To this end, we consider a modified model $M'$ from the original matching model $M = (G, \mu, R)$: in $M'$, once a class-$v$ and a class-$w$ item are combined to form a reneging part, they will \emph{not} renege. In this case, $\lambda_e^r$ is truly the arrival rate of reneging parts.
	
	On the one hand, the number of reneging parts in $M$ is at most that in $M'$. On the other hand, in $M'$, the number of class-$u$ items in the buffer changes on average by the quantity $\lambda_e^n - \lambda_e^r$. If $\lambda_e^n \geq \lambda_e^r$, even in $M'$, we have $\E[\Delta Z_u(t)] \geq 0$ for all $t$, implying that $\E[\Delta Z_u(t)] \geq 0$ for all $t$ in $M$, so $Z_u$ is a submartingale.
	
	By stability, at time $\tau = \min \{t \geq 0 \mid Z(t) \in C\}$, we have $\E[Z_u(\tau)] \leq \max_{z \in C} z_u < \infty$ since $C$ is compact, implying that $Z_u(0) \leq \max_{z \in C}$, a contradiction since we can choose $Z_u(0)$ to be arbitrary.
	
	To prove the sufficiency, assume $\lambda_e^r > \lambda_e^n$, we consider the following Lyapunov function
	\[
		L(z) = C_0 z_u + \frac{z_v^2}{\alpha_v} + \frac{z_w^2}{\alpha_w^2},
	\]
	for some constant $C_0 > 0$ to be determined later. Let $Z(t) = z$, the drift of $L$ is given by
	\[
		\Delta L = C_0 \Delta z_u + \frac{1}{\alpha_v} \left[(\Delta z_v)^2 + 2z_v \Delta z_v\right] + \frac{1}{\alpha_w} \left[(\Delta z_w)^2 + 2z_w \Delta z_w\right].
	\]
	Denote by $k$ the arrival at time $t$, we have $\Delta z_v = k_v - R_v (z_v + k_v) - m_v$ where $m_v$ is the number of matching class-$v$ items at time $t$. In particular, we have $m_v(t) = 1$ if and only if there is a matching at time $t$ (note that since only one item arrives at a time, at most one matching is possible), and $m_v(t) = 0$ otherwise.
	
	Recall also that $k_v, R_v \leq 1$, we may bound $(\Delta z_v)^2$ trivially as
	\[
		(\Delta z_v)^2 = \left(k_v - R_v (z_v + k_v) - m_v\right)^2 \leq 9 + R_v^2 z^2,
	\]
	which gives
	\begin{align*}
		\Delta L 
		= & \left(1 + \frac{9}{\alpha_v} + \frac{9}{\alpha_w}\right) + C_0 \Delta z_u \\
		& + \frac{1}{\alpha_v} \left[R_v^2 z_v^2 + 2z_v \left(k_v - R_v (z_v + k_v) - m_v\right)\right]\\
		& + \frac{1}{\alpha_w} \left[R_w^2 z_w^2 + 2z_w \left(k_w - R_w (z_w + k_w) - m_w\right)\right] \\
		\leq & \left(1 + \frac{9}{\alpha_v} + \frac{9}{\alpha_w}\right) + C_0 \Delta z_u \\
		& + \frac{1}{\alpha_v} \left[R_v^2 z_v^2 + 2z_v \left(k_v - R_v (z_v + k_v)\right)\right]\\
		& + \frac{1}{\alpha_w} \left[R_w^2 z_w^2 + 2z_w \left(k_w - R_w (z_w + k_w)\right)\right] \\
		\leq & \left(1 + \frac{9}{\alpha_v} + \frac{9}{\alpha_w}\right) + C_0 \Delta z_u \\
		& - \frac{1}{\alpha_v} \left[R_v z_v^2 - 2 z_v k_v\right] - \frac{1}{\alpha_w} \left[R_w z_w^2 - 2 z_w k_w \right]
	\end{align*}
	where the last step is because $R_v, R_w \leq 1$.
	
	Taking expectation and completing the squares, we have
	\begin{align*}
		\E[\Delta L] 
		& \leq \left(1 + \frac{9}{\alpha_v} + \frac{9}{\alpha_w}\right) + C_0 \E[\Delta z_u] - \left(z_v^2 - 2z_v \frac{\mu_v}{\alpha_v}\right) - \left(z_w^2 - 2 z_w \frac{\mu_w}{\alpha_w}\right) \\
		& \leq \left(1 + \frac{9}{\alpha_v} + \frac{9}{\alpha_w} + \frac{\mu_v^2}{\alpha_v^2} + \frac{\mu_w^2}{\alpha_w^2}\right) + C_0 \E[\Delta z_u]\\
		& - \left(z_v - \frac{\mu_v}{\alpha_v}\right)^2 - \left(z_w - \frac{\mu_w}{\alpha_w}\right)^2
	\end{align*}
	
	Now let us fix some $\varepsilon > 0$.
	
	Note that as we match whenever possible, we have $\min \{z_u, z_v, z_w\} = 0$. If $z_u > 0$, then $\min \{z_v, z_w\} = 0$. By definition, at time $t + \tau$ where $\E[\tau] = \frac{1}{\lambda_e^r}$, there is a reneging part available; by Wald's equation, during this time, there will be $\frac{\lambda_e^n}{\lambda_e^r} < 1$ new class-$u$ items arriving, thus $\E[\Delta z_u]$ \emph{over the period $[t, t+\tau]$} is given by $\frac{\lambda_e^n}{\lambda_e^r} - 1 < 0$.
	
	Choosing
	\[
		C_0 > \frac{\varepsilon + \frac{1}{\mu_e^r}\left(1 + \frac{9}{\alpha_v} + \frac{9}{\alpha_w} + \frac{\mu_v^2}{\alpha_v^2} + \frac{\mu_w^2}{\alpha_w^2}\right)}{1 - \frac{\lambda_e^n}{\lambda_e^r}}, 
	\]
	we have 
	\begin{align*}
		& \E[L(Z(t + \tau)) - L(Z(t)) \mid Z_u(t) > 0] \\
		\leq & \E[\tau]\left(1 + \frac{9}{\alpha_v} + \frac{9}{\alpha_w} + \frac{\mu_v^2}{\alpha_v^2} + \frac{\mu_w^2}{\alpha_w^2}\right) \\
		& + C_0 \E[Z_u(t + \tau) - Z_u(t) \mid Z_u(t) > 0] \leq -\varepsilon.
	\end{align*}
	
	On the other hand, if $z_u = 0$, note that $\Delta z_u \leq 1$, so for 
	\[
		C_1 = C_0 +1 + \frac{9}{\alpha_v} + \frac{9}{\alpha_w} + \frac{\mu_v^2}{\alpha_v^2} + \frac{\mu_w^2}{\alpha_w^2},
	\]
	we have by Cauchy-Schwarz inequality, 
	\begin{align*}
		\E[\Delta L \mid Z_u(t) = 0] 
		& \leq C_1 - \left(z_v - \frac{\mu_v}{2\alpha_v}\right)^2 - \left(z_w - \frac{\mu_w}{2\alpha_w}\right)^2 \\
		& \leq C_1 - \frac{1}{2}\left(z_v + z_w - \frac{\mu_v}{2\alpha_v} - \frac{\mu_w}{2\alpha_w}\right)^2.
	\end{align*}
	If $z_u + z_v \geq C_2 = \frac{\mu_v}{2\alpha_v} + \frac{\mu_w}{2\alpha_w} + 2\sqrt{C_1 + \varepsilon}$, then $\E[\Delta L \mid Z_u(t) = 0] \leq -\varepsilon$.
	
	In all cases, we have that whenever $\|Z(t)\|_1 \geq C_2$, then we have $\E[L(Z(t + \tau)) - L(Z(t))] \leq -\varepsilon$ for some stopping time $\tau$ of finite mean. Per randomised multi-step Foster's criterion \cite[Theorem 2.1]{Yuksel2013}, the chain $Z$ is positive recurrent, which means that the system is stable.
\end{proof}

\subsection{Computing $\lambda_e^r$}

Lemma \ref{lemma: stability characterisation for Y-topology} gives a stability characterisation, but it is not entirely satisfactory since it relies on $\lambda_e^r$, which gives rise to the question: what is the relationship between $\lambda_e^r$, $\mu_v$, and $\mu_w$?

Let us first remark that this is a well-justified question: concretely, suppose we are given such a model $(G, \mu, R)$ with explicit value of $\mu$ and description of $R$, how do we verify the stability?

Ideally, we might hope for a simple relation computing $\lambda_e^r$. Moreover, as we have seen from the non-reneging case where the stability depends only on the first moment of $\mu$ (whilst assuming that $\mu$ is of finite second moment), we may expect the same for the reneging case: namely, there might be a stability criterion depending only on $\alpha$ and some mild assumptions on $R$.

It turns out that \emph{no such criterion exists}, as we will illustrate below with two examples.

\subsubsection*{Example 1: Independent reneging}

Suppose that each item, if reneging, does so independently. Per Remark \ref{remark: independent reneging}, this corresponds to having $R_v(t)$ follows the (normalised) binomial distribution $\frac{1}{Z_v(t) + k_v(t)} B(Z_v(t) + k_v(t), \alpha_v)$ with respect to the number $Z_v(t) + k_v(t)$ of class-$u$ items presented in the queue after the arrivals and the reneging rate $\alpha_v$, and similarly for class $w$.

To simplify this model even further, we revisit the original continuous-time model where the arrivals are according to independent Poisson processes, and the reneging time of, say, class-$v$ items to be exponentially distributed with parameter $\frac{1}{\alpha_v}$. In this case, with probability 1, no two items renege at the same time, so starting with this model, we may remove the binomial distribution for the reneging. The resulting discretised model is as follow: at time $t$,
\begin{itemize}
	\item an item of class $v$ (resp. $w$) reneges with probability $\alpha_v Z_v(t)$ (resp. $\alpha_w Z_w(t)$), or
	\item an item of class $u$ (resp. $v$, $w$) arrives with probability $\frac{\mu_u}{C}$ (resp. $\frac{\mu_v}{C}$, $\frac{\mu_w}{C}$).
\end{itemize}
where $C(t) = \alpha_v Z_v(t) + \alpha_w Z_w(t) + \|\mu\|_1$ is the normalisation constant.

Now let us compute $\lambda_e^r$. In what follows, we condition on $Z_v(t) = 0 \vee Z_w(t) = 0$. Consider the process $Q(t) = Z_v(t) - Z_w(t)$, we see that $Q$ is a birth-death process, where for $n \geq 0$, 
\[
\begin{gathered}
	p_n = \P\left[Q(t+1) = Q(t) + 1 \mid Q(t) = n \geq 0\right] = \frac{\mu_v}{C(t)}, \\
	q_n = \P\left[Q(t+1) = Q(t) - 1 \mid Q(t) = n > 0\right] = \frac{\mu_w + \alpha_v Q(t)}{C(t)}
\end{gathered}
\]
and for $n \leq 0$,
\[
\begin{gathered}
	p_n = \P\left[Q(t+1) = Q(t) + 1 \mid Q(t) = n < 0\right] = \frac{\mu_v - \alpha_w Q(t)}{C(t)}, \\
	q_n = \P\left[Q(t+1) = Q(t) - 1 \mid Q(t) = n \leq 0\right] = \frac{\mu_w}{C(t)}
\end{gathered}
\]
where with $C(t) = \begin{cases}
	1 + \alpha_v Q(t) & \text{ if } Q(t) \geq 0 \\
	1 -\alpha_w Q(t) & \text{ otherwise}
\end{cases}$.

Assuming the matching model $(G, \mu, R)$ is stable, $Z$ (and thus $Q$) is positive recurrent. $Q$ then admits the stationary distribution $\pi$ given by
\[
	\pi_n = \begin{cases}
		\pi_0 \prod_{k = 0}^{n-1} \frac{p_k}{q_{k+1}} & \text{ if } n \geq 0\\
		\pi_0 \prod_{k = n}^{-1} \frac{q_{k+1}}{p_k} & \text{ if } n < 0\\
	\end{cases},
\]
where $\pi_0^{-1} = \left(1 + \sum_{n = 1}^\infty \prod_{k = 0}^n \frac{p_k}{q_{k+1}} + \sum_{n = 1}^\infty \prod_{k = -n}^{-1} \frac{q_{k+1}}{p_k}\right)^{-1}$.

Conditioning on $Q(t) = n > 0$ (resp. $n < 0$), we have a new reneging part with probability $\frac{\mu_w}{\mu_w + \alpha_v Q(t)}$ (resp. $\frac{\mu_v}{\mu_v - \alpha_w C(t)}$). As such, we have
\[
	\lambda_e^r = \sum_{n > 0} \frac{\mu_w}{\mu_w + \alpha_v n} \pi_n + \sum_{n < 0} \frac{\mu_v}{\mu_v - \alpha_w n} \pi_n.
\]

\subsubsection*{Example 2: all-or-nothing}

At the other end of the spectrum compared with independent reneging is all-or-nothing reneging, namely at time $t$, with probability $\alpha_v$ (resp. $\alpha_w$), \emph{all} class-$v$ (resp. class-$w$) items renege.

In our setting, this means $R_v(t)$ (resp. $R_w(t)$) is independent and identically distributed for all $t$, following Bernoulli distribution of parameter $\alpha_v$ (resp. $\alpha_w$). Note that we still have $\E[R(t)] = \alpha$ as in the previous example.

On the other hand, we carry out the same analysis as above with the process $Q$, it is again positive recurrent. Suppose without loss of generality that $\mu_v \geq \mu_w$, considering the stationary distribution $\pi$ of $Q$ and for $n > 0$, the balance equation reads
\[
	\pi_n(\mu_v + \mu_w + \alpha_v n) = \pi_{n-1} \mu_v + \pi_{n+1} \mu_w,
\]
which gives a second-order linear recurrence sequence with affine coefficients, 
\[
	\pi_{n+1} = \frac{\mu_v + \mu_w + \alpha_v n}{\mu_w} \pi_n - \frac{\mu_v}{\mu_w} \pi_{n-1}.
\]
Let $\pi_n = \left(\frac{\mu_v}{\mu_w}\right)^\frac{n}{2} u_n$, we have
\[
	u_{n+1} = \frac{\mu_v + \mu_w + \alpha_v n}{\sqrt{\frac{\mu_v}{\mu_w}}} u_n - u_{n-1}.
\]
Set $\chi = \frac{2}{\alpha_v}\sqrt{\frac{\mu_v}{\mu_w}}$ and $\nu = \frac{\mu_v + \mu_w}{\alpha_v}$, a we have
\[
	u_{n+1} = \frac{2(n + \nu)}{\chi} u_n - u_{n-1},
\]
whence we recognise the recurrence relation of Bessel functions. Indeed, we have an explicit formula for $\pi_n$ in terms of Bessel functions of order $n + \nu$, given by
\[
	\pi_n = \left(\frac{\mu_v}{\mu_w}\right)^\frac{n}{2} \left[C_1 J_{n+\nu} \left(\frac{2}{\alpha_v}\sqrt{\frac{\mu_v}{\mu_w}}\right) + C_2 Y_{n+\nu} \left(\frac{2}{\alpha_v}\sqrt{\frac{\mu_v}{\mu_w}}\right)\right],
\]
for some suitable constants $C_1, C_2$. The second term with Bessel function of second kind is dominant, and in particular, as $n$ tends to infinity, we have
\[
	\left(\frac{\mu_v}{\mu_w}\right)^\frac{n}{2}\cdot Y_{n + \nu}\left(\alpha_v \sqrt{\frac{\mu_w}{\mu_v}}\right) \sim -\frac{1}{\pi} \Gamma(n + \nu) \alpha_v^n \to -\infty,
\]
so to maintain summability and positivity, we necessarily have $C_2 = 0$. A similar argument applies for $n < 0$, whence we may compute $\pi$ and eventually $\lambda_e^r$ explicitly.

\subsubsection*{Insight}

It should not be necessary to carry out the calculations in full in order to see that in general, $\lambda_e^r$ is different in both cases. These two examples illustrate that finding $\lambda_e^r$ requires knowing the exact nature of the reneging process $R$, and moreover, sometimes there might be no expression of $\lambda_e^r$ using elementary functions. Computing $\lambda_e^r$, and thus verifying stability, even in this simple case, is already a hard problem.

This is also the reason why we choose to have two reneging classes instead of one: if there were only one reneging class $v$, we always have by definition $\lambda_e^r = \mu_v$. As such, this also highlights the main difficulty between the reneging graph case and the reneging hypergraph case, which stems from the fact that, unlike the graph case, we need multiple items to obtain a reneging part, which requires more than one arrivals, during which other items may renege on their own.

As such, unlike the graph case where the stability criterion is a straightforward generalisation of the non-reneging case \cite{Jonckheere2023}, adding reneging in case of hypergraphs makes the problem considerably harder, and no analogues of \texttt{NCOND} condition exist.
	\section{Stability criterion in general case and necessity}
\label{section: necessity}

We have seen from the non-reneging case \cite{Nguyen2026, Nguyen2026a} that for hypergraphs, the crucial mechanism to stabilise a model is to manage the imbalance between the items assigned to a given matching type, and the necessary and sufficient criterion in this case essentially says that regardless of the buffer's state, the imbalance can always be mitigated \cite[Condition (1)]{Nguyen2026a}.

In the previous Section, we have a case study demonstrating another mechanism at play when there is reneging, namely the reneging parts must arrive more often than the non-reneging part. However, computing the rate of generating reneging parts is a difficult problem, where we have shown that, unlike the non-reneging case, stability is sensitive with respect to the reneging process $R$, depending on it as a whole and not just through its moments such as reneging rate $\alpha$. This begs the question: if verifying stability is already hard, is there any hope for a maximally stabilising policy?

The goal of this Section is two-fold:
\begin{itemize}
	\item We combine both mechanisms outlined above to give a stability criterion (Condition \eqref{eq: ncond}).
	\item We show that it is necessary for stability, and point out a technical difficulty in showing its sufficiency, which will be resolved in Section \ref{section: matching process} and Section \ref{section: sufficiency}.
\end{itemize}

\subsection{Stability criterion}

One advantage of the online assignment framework is that it allows to decouple the matching process and treat the hyperedges as independent. Roughly speaking, we may see $G$ approximately as $|E|$ copies of the case study in the previous Section.

For each hyperedge $e$, we split it into two parts, namely the non-reneging part $e^n = \{u \in e \mid \alpha_u = 0\}$ and the reneging part $e^r = \{v \in e \mid \alpha_v > 0\}$. The non-reneging parts collectively induce a matching sub-model $(G^n, \mu^n)$ where $G^n = (V^n, E^n)$ is a sub-hypergraph with $V^n = \{u \mid \alpha_u = 0\}$ the non-reneging classes and $E^n = \{e^n \mid e \in E\}$ the non-reneging parts of the hyperedges. Similarly, the reneging parts induces a sub-hypergraph $G^r = (V^r, E^r)$ with $V^r = \{u \mid \alpha_u > 0\}$ the non-reneging classes and $E^r = \{e^r \mid e \in E\}$ the non-reneging parts of the hyperedges.

If $(G, \mu, R)$ is stabilisable, then so is $(G^n, \mu)$; one may see it as a generalised instance of $(G, \mu, R)$ where the arrival rate of reneging classes is very large, so we always have reneging parts whenever necessary whilst having the stability unaffected. As such, we have the average matching rate $\lambda^n\in \R^{E^n}$, where $\lambda_e^n$ is the average number of type-$e^n$ matchings during a given unit of item.

The idea is to see $\mu_e^n$ as the ``arrival rate'' of the non-reneging class-$u$ items in the case study; the definition of $\mu_e^r$ remains the same, except that we generalise it for more than two reneging classes. Formally speaking, we define
\[
	\frac{1}{\lambda_e^r} = \E\left[\min\left\{t \biggr\vert \min_{v \in e^r} Z_{v, e}(t) - e_v \geq 0\right\} \biggr\vert \min_{v \in e^r} Z_{v, e}(t) - e_v < 0\right].
\]

Per the previous Section, we expect $\lambda_e^n < \lambda_e^r$; by the online assignment framework, the matching processes on hyperedges are decoupled, so we should expect this inequality to hold for all $e$. Of course, the precise value of $\lambda_e^n$ and $\lambda_e^r$ depends on our choice of assignment rate $\nu$ (recall that it is possible that $\lambda^n$ is not unique).

Our stability criterion essentially says that there exists an assignment rate such that this inequality always holds, in addition to the stability of $(G^n, \mu)$. There are many ways to formulate the latter, but for simplicity, we choose the linear algebraic characterisation \cite[Condition (3)]{Nguyen2026a}. In what follows, if all classes in a matching type $e$ are reneging, meaning $e^n = \emptyset$, then we define by convention that $\lambda_e^n = 0$. Similarly, if all classes in a matching type $e$ are reneging, meaning $e^r = \emptyset$, then we define by convention that $\lambda_e^r = \infty$.

\begin{equation}
	\label{eq: ncond}
	\textrm{\parbox{.8\textwidth}{The restriction $G\vert_{\R^{V^n}}$ of $G$ onto $\R^{V^n}$ is surjective. Moreover, there exists a positive solution $\lambda^n \in \R^{E^n}$ of the equation $G\vert_{\R^{V^n}} \lambda^n = \mu\vert_{\R^{V^n}}$, and there exists an assignment rate $\nu = (\nu_{v, e})_{v \in e^r, e \in E}$ of the reneging classes, such that for all $e \in E$, we have $\lambda_e^n \leq \lambda_e^r$.
	}}
\end{equation}

\subsection{Necessity of Condition \eqref{eq: ncond}}

The proof of necessity is essentially a generalisation of the law-of-large-number argument from the previous Section, except the interplay between hyperedges makes the proof more technical.

\begin{lemma}
	\label{lemma: necessity of ncond}
	Condition \eqref{eq: ncond} is necessary for the stability of $(G, \mu, R)$.
\end{lemma}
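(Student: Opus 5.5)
We argue by contradiction: we fix a policy $\Phi$ that stabilises $(G, \mu, R)$ and extract from it the objects required by Condition \eqref{eq: ncond}. The first step is to pass to long-run averages. Since $\Phi$ stabilises the model, the return times to the compact set $C$ have finite mean, so we can take a regenerative (or Cesàro) limit along these returns. This yields, for each $e \in E$, an average matching rate $\lambda_e$ equal to the mean number of type-$e$ matchings per unit time. It also yields, for each $(v, e)$ with $\alpha_v > 0$, an average rate $\nu_{v, e}$ at which class-$v$ mass is ultimately consumed by type-$e$ matchings or lost to reneging while being ``destined'' for $e$. We attribute reneged mass proportionally to the types, which amounts to reading $\Phi$ as an online assignment policy as in the non-reneging case \cite{Nguyen2026a}. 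The resulting $\nu$ is an assignment rate for the reneging classes. Mass conservation for non-reneging classes, together with the boundedness of $\E[Z_u]$ along returns to $C$, gives $G\vert_{\R^{V^n}} \lambda^n = \mu\vert_{\R^{V^n}}$ with $\lambda^n_e := \lambda_e$.

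Next we derive the non-reneging part of the condition. The sub-model $(G^n, \mu)$ is stabilised by the modified policy described before Condition \eqref{eq: ncond}, in which reneging parts are always available. We justify this by a coupling: we enlarge the arrival rate of reneging classes, which can only help, and observe that the non-reneging queue lengths are dominated. We then invoke the necessity of the linear algebraic criterion \cite[Condition (3)]{Nguyen2026a}. This gives surjectivity of $G\vert_{\R^{V^n}}$ and the existence of a positive solution $\lambda^n$. If the $\lambda$ extracted in the first step is not strictly positive, we perturb it towards the positive solution, using the convexity of the solution set. Any such perturbation must preserve the inequality we prove next, so we will prove that inequality with slack wherever possible.

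Finally, we show $\lambda_e^n \le \lambda_e^r$ for each $e$ with $e^n \neq \emptyset$, mimicking the proof of Lemma \ref{lemma: stability characterisation for Y-topology}. We build a modified model $M'$ in which, once a full reneging part $e^r$ is available in the $e$-assigned queues, its items stop reneging. In $M'$ the rate at which reneging parts for $e$ are generated is at most $\lambda_e^r$: after each use of a reneging part, the queues are in the state $\min_{v \in e^r} Z_{v,e} - e_v < 0$, and the expected time to the next part is $1/\lambda_e^r$. A renewal and Wald argument then bounds the long-run rate of type-$e$ matchings by $\lambda_e^r$. A monotone coupling shows that $M$ produces no more reneging parts than $M'$. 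Since each type-$e$ matching consumes one reneging part, $\lambda_e = \lambda_e^n \le \lambda_e^r$.

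The main obstacle is this coupling and renewal step. The definition of $\lambda_e^r$ is stated for the assigned queues $Z_{v,e}$ and conditions on a deficit state without specifying it, whereas under a general $\Phi$ the $e$-assignment of arriving mass is adaptive rather than i.i.d. with rates $\nu$. I would first try to show that the expected hitting time is monotone in the initial deficit state, so that the worst case is the empty state. I would then show that adaptivity cannot beat the stationary assignment with the same long-run rates $\nu$, via a Jensen-type concavity of the part-generation rate in the assignment probabilities. If that fails, I would instead define $\nu$ as the optimiser over assignment rates and argue directly that the total part-generation rate is bounded by the supremum.
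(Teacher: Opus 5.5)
Your route differs from the paper's: you extract $\nu$ and $\lambda$ from the stabilising policy and prove $\lambda^n_e\le\lambda^r_e$ edge by edge. It breaks down exactly at the step you flag, and the repairs you suggest do not work.

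First, resolving the ambiguity in $\lambda^r_e$ with the empty deficit state points the wrong way for necessity. Under the monotone coupling the empty state has the \emph{largest} expected hitting time, so it gives the \emph{smallest} $\lambda^r_e$. In $M'$, each inter-formation period starts from a leftover deficit state, and under batch arrivals one epoch can create several parts. So you would get a part-generation rate $\ge\lambda^r_e$, not $\le$. What necessity needs is the stationary part-generation rate of the chain $Z'$ in which full reneging parts are removed as soon as they form. This is what the paper effectively uses, through the stationary distribution $\pi$ of $Z'$; with it, the renewal step is just ergodicity.

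Second, and decisively, the claim that adaptivity cannot beat stationary i.i.d.\ routing at the same long-run rates $\nu$ is false, so no Jensen-type concavity can deliver it. Take all-or-nothing reneging. Adding class-$v$ mass to a non-empty $e$-queue does not prolong the period in which a $v$ is available to $e$. Routing $v$ to $e$ only when that queue is empty therefore keeps it non-empty strictly more often than i.i.d.\ routing with the same long-run fraction. When the partner class $w$ arrives rarely compared with wipe-outs, this yields strictly more $e$-parts.

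Likewise, proportionally attributing reneged mass does not turn a pooled policy into an online assignment policy. In $(G,\mu,R)$ a single $R_v(t)$ acts on the whole class-$v$ pool, and a pooled item stays available to every $e\ni v$. By contrast, $\lambda^r_e(\nu)$ is defined with independent $R_{v,e}(t)$ and exclusive routing. So the inequality at the $\nu$ read off from $\Phi$ is not something you can expect to prove.

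Your fallback, taking $\nu$ to be an optimiser, runs into the coupling between hyperedges. Edges sharing a reneging class compete for the same mass, so no single $\nu$ maximises all $\lambda^r_e$ at once, and per-edge suprema do not produce one $\nu$ valid for every $e$. Handling this is exactly what the paper's argument is built for, and it never extracts $\nu$ from $\Phi$. It argues by contradiction:
\begin{enumerate}
\item After reducing to $\mu_v>0$ and $\alpha_v<1$, so that $Z'$ is ergodic, $\nu\mapsto\lambda^r(\nu)$ is continuous by perturbation bounds for stationary distributions.
\item The set of achievable $\lambda^r-\lambda^n$ is taken to be compact and convex, via time-sharing between assignment rates.
\item If \eqref{eq: ncond} fails, a separating hyperplane gives weights $w\ge 0$ making $\sum_e w_e(\lambda^n_e-\lambda^r_e)$ non-negative for all admissible $(\lambda^n,\nu)$.
\item The potential $L_-(z)=\sum_e w_e\bigl[\min_{u\in e^n}\lfloor z_{u,e}/e_u\rfloor-\min_{v\in e^r}\lfloor z_{v,e}/e_v\rfloor\bigr]$ then has non-negative long-run drift. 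This contradicts stability, since $L_-(Z(0))$ can be made arbitrarily large.
\end{enumerate}
Be aware that the paper's final drift bound also tacitly needs the part rates of $\Phi$ to be dominated by the stationary-assignment region. It needs this only in the weaker scalarised form $\sum_e w_e\rho_e\le\sup_\nu\sum_e w_e\lambda^r_e(\nu)$, where $\rho_e$ is the long-run rate at which $\Phi$ forms type-$e$ reneging parts, not edge by edge at a fixed $\nu$.

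Two smaller points. Projecting $C$ onto $\R^{V^n}$, as the paper does, gives stabilisability of $(G^n,\mu)$ directly, without your arrival-rate coupling. Your perturbation of $\lambda$ towards a positive solution has no slack on edges where $\lambda^n_e=\lambda^r_e$ is tight. It may also force you to change $\nu$ on unused edges, where the extracted $\nu_{v,e}$, and hence $\lambda^r_e$, can vanish.
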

\begin{proof}
	First, we indeed have $(G^n, \mu)$ to be stabilisable: suppose $(G, \mu, R)$ is stabilisable, let $C$ be a compact set in the definition of stability, then it suffices to take the projection of $C$ onto $\R^{V^n}$, and we have the stability of $(G^n, \mu)$. As this is a non-reneging matching model, our previous results apply, which gives the surjectivity of $G\vert_{\R^{V^n}}$ and the existence of $\lambda^n$.
	
	It thus remains the existence of an assignment rate $\nu = (\nu_{v, e})_{v \in e^r, e \in E}$ of the reneging classes, such that for all $e \in E$, we have $\lambda_e^n \leq \lambda_e^r$.
	
	By our convention that $\lambda_e^r = \infty$ if $e^n = \emptyset$, without loss of generality, we may consider only the hyperedges $e$ with $e^n \neq \emptyset$. Furthermore, we may assume without loss of generality that $\mu_v > 0$ and $\alpha_v < 1$ for all $v$. As a consequence, the chain $Z$ when restricted to $\R^{V^r}$, where $V^r = \{v \mid \alpha_v > 0\}$, is ergodic even when the model is unweighted (so that it takes values in $\N^{V^r}$).
	
	On the other hand, note that the dependence of the transition rate on $\nu$ is linear: the probability of going from a buffer state $p$ to another buffer state $q$ can always be written as $\langle \ell_1, \nu \rangle + \langle \ell_2, \alpha \odot p \rangle$ where $\odot$ is the Hadamard product, and $\ell_1$ and $\ell_2$ are some fixed vectors depending on $p$, $q$, and the exact nature of the reneging process $R$.
	
	From the two paragraphs above, we conclude that if we define $Z'(t) \in \R^{G^r}$ as the process generating the reneging parts, that is
	\[
	\begin{gathered}
		Z''_{v, e}(t+1) = (1 - R_{v, e}(t))(Z'_{v, e}(t) + k_{v, e}(t)), \\
		Z'_{v, e}(t+1) = Z''_{v, e}(t) - e_v \min_{v \in e^r} \left\lfloor \frac{Z''_{v, e}(t)}{e_v} \right\rfloor,
	\end{gathered}
	\]
	then the map taking an assignment rate $\nu$ as arguments and giving the stationary distribution $\pi$ of $Z'$ is continuous with respect to total variation \cite{Liu2012}. As $\mu^r$ depends linearly (and thus continuously) on $\pi$ and the transition rate, the map taking an assignment rate $\nu$ as arguments and giving $\lambda^r$ is continuous.
	
	Now, we may proceed as in the proof of necessity for \texttt{NCOND}-like condition for the non-reneging case \cite[Lemma 3.1]{Nguyen2026}.
	
	On the one hand, the set of assignment rates $\nu$ is a convex compact polytope, so its image $\mathcal{P}^r$ under $\phi$ is also compact. On the other hand, one also sees that $\mathcal{P}^r$ is convex: consider two assignment rates $\nu_0$ and $\nu_1$ giving $\lambda_0^r$ and $\lambda_1^r$, then for $0 \leq x \leq 1$, taking the assignment rate $x \nu_1 + (1-x) \nu_2$ is equivalent to taking $\nu_1$ at each time $t$ with probability $x$, and $\nu_2$ with probability $1-x$. As such, the result $\lambda^r_x$ must satisfy that $\lambda_0^r \leq \lambda^r_x \leq \lambda_1^r$; this process is continuous, in the sense that as $x$ tends to $0$ (resp. $1$), then $\lambda^r_x$ tends to $\lambda_0^r$ (resp. $\lambda_1^r)$.
	
	On the other hand, the set of positive solutions $\lambda^n \in \R_{> 0}^{E^n}$ to the equation $G\vert_{\R^{V^n}} \lambda^n = \mu\vert_{\R^{V^n}}$ is a convex compact polytope $\mathcal{P}^n$: it is precisely the intersection between the affine subspace $\left(G\vert_{\R^{V^n}}\right)^{-1} \left(\mu\vert_{\R^{V^n}}\right)$ and the positive quadrant $\R_{> 0}^{E^n}$. As such, the product $\mathcal{P} = \mathcal{P}^r \times \mathcal{P}^n$ is convex and bounded.
	
	$\mathcal{P}$ lives in $\R^E \times \R^E$, on which we define the map $\Delta : \R^E \times \R^E \to \R^E$ given by
	\[
		\Delta (x, y) = x-y,
	\]
	which is linear, so the image $\mathcal{U} = \Delta \mathcal{P}$ of $\mathcal{P}$ under $\Delta$ is convex and closed.
	
	Suppose Condition \eqref{eq: ncond} fails, then this says that for all solutions $\lambda^n$ and assignment rates $\nu$, we have that for any point $x$ in $\mathcal{U}$, there exists a hyperedge $e \in E$ such that $x_e \geq 0$. As such, $\mathcal{U}$ and the negative quadrant $\R_{\leq 0}^E$ have disjoint interior. The set $\mathcal{U}$ is bounded, and $\R_{\leq 0}^E$ is closed, so by hyperplane separation theorem, there exists a hyperplane $H$ separating $\mathcal{U}$ and $\R_{\leq 0}^E$. We may without loss of generality translate $H$ so that it is tangent to the latter, but as it is a cone, $H$ will pass by the vertex, which is the origin, hence $H$ is now a linear hyperplane.
	
	Let $w$ be a normal vector of $H$, we can write $H = \{x \mid \langle x, w \rangle = 0\}$, and moreover, up to switching $w$ with $-w$ if necessary, we have
	\begin{itemize}
		\item $\langle x, w \rangle \geq 0$ for all $x \in \mathcal{U}$, and
		\item $\langle x, w \rangle \leq 0$ for all $x \in \R_{\leq 0}^E$.
	\end{itemize}
	The second condition says that $w$ is in the polar cone of $\R_{\leq 0}^E$, which is the positive quadrant $\R_{\geq 0}^E$, so $w$ has only non-negative coordinates.
	
	Finally, consider the function
	\[
		L_-(z) = \sum_{e \in E} w_e \left[\left(\min_{u \in e^n} \left\lfloor \frac{z_{u, e}}{e_u}\right\rfloor\right) - \left(\min_{v \in e^r} \left\lfloor \frac{z_{v, e}}{e_v}\right\rfloor\right)\right].
	\]
	
	The average long-term drift of $L_-(Z(t))$ is at least $\sum_{e \in E} w_e \left(\lambda_e^n - \lambda_e^r\right)$, which is non-negative by the first condition. On the one hand, the stability of $(G, \mu, R)$ implies that $L_-(Z(t))$ is uniformly bounded. On the other hand, we may set $X(0)$ so that $L_-(Z(0))$ is arbitrarily large, a contradiction.
\end{proof}

The proof of sufficiency is a modification of that from the previous Section, essentially a negative drift argument, except that we have an additional term to account for the stability of $(G^n, \mu)$.

Ideally, we would try to analyse the drift of some function resembling
\[
\begin{aligned}
	&\sum_{e \in E} \left[\frac{1}{\|e^n\|_1}\sum_{\{u, v\} \in (e^n)^2} e_u e_v\left(\frac{x_{u, e}}{e_u} - \frac{x_{v, e}}{e_v}\right)^2\right.\\
	& \left. +  C_0 \left(\left(\min_{u \in e^n} \left\lfloor \frac{x_{u, e}}{e_u}\right\rfloor\right) - \left(\min_{v \in e^r} \left\lfloor \frac{x_{v, e}}{e_v}\right\rfloor\right)\right) + \sum_{v \in e^r} \frac{x_{v, e}^2}{\alpha_v} \right].
\end{aligned}
\]
However, there is one last technical difficulty: unlike the term $\min_{v \in e^r} \left\lfloor \frac{x_{v, e}}{e_v}\right\rfloor$ whose (average long-term) drift is roughly given by $\mu_e^r$, the drift of the term $\min_{u \in e^n} \left\lfloor \frac{x_{u, e}}{e_u}\right\rfloor$ is rather hard to analyse.

Intuitively, in order to have a non-reneging part, we might need to wait more than one time step; when the waiting is over, since the arrivals are in batches, we might have more than one non-reneging parts. This irregularity makes it hard to control the drift of this latter term uniformly.

Note that this phenomenon does not occur in the case study since the creation of non-reneging parts involves only one class of items; nor does it occur in the non-reneging case since we \emph{ignore} entirely the creation of matchings and only care about the imbalance between the items assigned to the same matching type.
	\section{Matching process}
\label{section: matching process}

To address the problem of analysing the drift of $\min_{u \in e^n} \left\lfloor \frac{z_{u, e}}{e_u}\right\rfloor$ as we have identified, in this Section, we introduce a process called the matching process which acts as a proxy for this term, but whose dynamics is independent from the matching system and thus may be controlled more easily.

As this concerns only $(G^n, \mu)$ which is a non-reneging matching model, in this Section, we consider only the non-reneging case, meaning that we have $\alpha_v = 0$ for all $v$.

\subsection{Definition}

Recall that by the assumption that at any point, we make as many matchings as possible using items assigned to the same matching type, we have that the number of type-$e$ matchings $M_e(t)$ is given by $M_e(t) = \min_{v \in e} \llfloor \frac{B_{v, e}(t)}{e_v} \rrfloor$ and the dynamics of the process $M = (M_e(t))_{e \in E, t \in \N}$ is determined by $X$, the arrivals, and the reneging.

Recall also the intuition of our stability criterion using assignment rates \cite[Condition (1)]{Nguyen2026a}, where for all $G$-supports $U$, there exists an assignment rate $\nu$ such that for all $(v, e) \in U$, we have
\[
	\frac{\nu_{v, e}}{e_v} < \frac{\nu_e}{|e|}.
\]
The idea is that $U$ is the set of pairs $(v, e)$ where the class-$v$ items assigned to the matching type $e$ are in abundance, meaning we have $X_{v, e}(t) \geq e_v$. As all items are non-reneging, they may only leave the system by being matched, and the number of matchings required is heuristically $\frac{X_{v, e}(t)}{e_v}$. The change of this quantity is heuristically $\frac{\nu_{v, e}}{e_v} - \frac{\nu_e}{|e|}$; what this stability condition says is that we can always reduce the number of matchings needed, in some sense.

This intuition was made clearer when we showed \cite[Theorem 5.1]{Nguyen2026a} that $L$-MaxWeight policy is maximally stable where the function $L$ is given by
\[
L(X(t)) = \sum_{e \in E} \frac{1}{\|e\|_1} \sum_{\{u, v\} \in e^2} e_u e_v \left(\frac{X_{u, e}(t)}{e_u} - \frac{X_{v, e}(t)}{e_v}\right)^2.
\]
One way to interpret $L$ is that this function measures the imbalance amongst the items assigned to a given matching type: in particular, the larger it is, the more imbalance the items assigned, and the further away we are from making perfect matchings. By employing $L$-MaxWeight, we try to restore the balance, and the stability criterion \cite[Condition (1)]{Nguyen2026a} says that we always makes progress. Since no items leave the system without being matched, it suffices to nudge the buffer toward equilibrium however so slightly using some appropriate assignment rates.

This ``equilibrium'' has always been implicitly defined as the ideal states where $\frac{X_{u, e}(t)}{e_u} = \frac{X_{v, e}(t)}{e_v}$ for all $u, v \in e$. However, one way to make this notion explicit is by instead setting $\frac{X_{u, e}(t)}{e_u} = M_e(t)$ for all $u \in e$. In this sense, the ``imbalance'' is now measured by the differences $\frac{X_{u, e}(t)}{e_u} - M_e(t)$.

Now we remove the constraint $M_e(t) = \min_{v \in e} \llfloor \frac{B_{v, e}(t)}{e_v} \rrfloor$, and let $M$ be an $\R^E$-valued process on its own, which we call the matching process. We enlarge the system description to be the pair $(X, M)$.

The dynamics is given by the same equation as before: namely, after the arrivals, the buffer state is $B(t) = (B_{v, e}(t))_{(v, e) \in G}$. Then, we may define $M(t)$, whose difference with $M(t-1)$ may depend on the entire history $\mathcal{H}_{t-1}$ and the arrivals at time $t$ represented by $B(t)$. Finally, for $(v, e) \in G$, we let $X_{v, e}(t+1) = B_{v, e}(t) - e_v \min_{u \in e} \left\lfloor \frac{B_{u, e}(t)}{e_u}\right\rfloor$ and $M_e(t+1) = M_e(t) + \Delta M_e(t) - \min_{u \in e} \left\lfloor \frac{B_{u, e}(t)}{e_u}\right\rfloor$, where $\Delta M_e(t)$ is some deterministic increment at time $t$ (see the next Subsection).

\begin{remark}
	Readers familiar with literature on stochastic matching might recognise the similarity with the notion of virtual matchings by Nazari and Stolyar \cite{Nazari2019}. Indeed, one may interpret $M_e(t)$ as the number of virtual type-$e$ matchings.
	
	Other than distinct description and setting, compared with the framework of Nazari and Stolyar, there are three conceptual differences in ours:
	\begin{itemize}
		\item the evolution of $M$, or in other words, the decision to issue virtuals matchings or not, \emph{depends} on the arrival at time $t$;
		\item in our setting, virtual type-$e$ matchings may \emph{only} be realised by the items assigned to $e$; Nazari and Stolyar allowed such virtual matchings to be realised by \emph{any} items available in the buffer;
		\item strictly speaking, $M_e(t)$ is \emph{not} the number of virtual type-$e$ matchings, as we allow it to take non-integer values.
	\end{itemize}
\end{remark}

\subsection{A new stability criterion for the non-reneging case}

To see how to derive heuristically a stability criterion from this interpretation, consider a $G$-support $U$ comprised of pairs $(v, e)$ where the class-$v$ items assigned to the matching type $e$ are in imbalance. In this context, this is now defined as
\[
	U = \left\{(v, e) \biggr\vert \frac{X_{v, e}(t)}{e_v} \neq M_e(t)\right\}.
\]
Note that this set is a proper $G$-support in the sense of Definition \ref{definition: G-support}. Moreover, it induces two more $G$-supports as its partition, given by
\[
	U_+ = \left\{(v, e) \biggr\vert \frac{X_{v, e}(t)}{e_v} > M_e(t)\right\},
\]
and
\[
	U_- = \left\{(v, e) \biggr\vert \frac{X_{v, e}(t)}{e_v} < M_e(t)\right\}.
\]
One may see $U_+$ and $U_+$ are comprised of the pairs $(v, e)$ where the class-$v$ items assigned to the matching type $e$ are in shortage and in abundance, respectively.

Now for the increment $\Delta M(t)$, we choose a vector $\phi = (\phi_e)_{e \in E} \in \R^E$. A priori, this might depends on the arrivals and the assignments at time $t$, represented by $B(t) - X(t)$, but we will be more strict and require that $\Delta M$ depends only on the $G$-support $U$. Similarly, a priori $\Delta M$ is \emph{random}, depending on the arrivals and the assignments which are random themselves. However, we will be more strict and require that $\Delta M$ be \emph{deterministic}.

Thus, given an assignment rate $\nu$, the expected change of the quantity $\frac{X_{v, e}}{e_v} - M_e$ is given by
\[
	\E\left[\Delta\left(\frac{X_{v, e}}{e_v} - M_e\right) \biggr\vert X(t), M(t)\right] = \frac{\nu_{v, e}}{e_v} - \phi_e.
\]
If $(v, e) \in U_+$, we would expect this change to be negative, and similarly, if $(v, e) \in U_-$ (resp. $(v, e) \not\in U$), we would expect this quantity to be positive (resp. zero). This gives us a guess at a stability criterion.

\begin{equation}
	\label{eq:onlcond_non_reneging}
	\textrm{\parbox{.8\textwidth}{For all $G$-supports $U = U_+ \sqcup U_-$, there exist an assignment rate $\nu$ and $\phi$ depending only on $U$, such that for all $(v, e) \in U_+$,
	\[
		\frac{\nu_{v, e}}{e_v} < \phi_e,
	\]
	for all $(v, e) \in U_-$,
	\[
		\frac{\nu_{v, e}}{e_v} > \phi_e.
	\]
	and for all $(v, e) \not\in U$, 
	\[
		\frac{\nu_{v, e}}{e_v} = \phi_e.
	\]
	}}
\end{equation}

Note that there are many heuristic steps in the derivation of this condition. In particular, the assumption that $\Delta M$ depends only on $U$ and is deterministic, is not satisfied when $M_e(t) = \min_{v \in e} \llfloor \frac{B_{v, e}(t)}{e_v} \rrfloor$, so this condition is a priori stronger than what is necessary.

Fortunately, this strengthening brings no loss of generality. Indeed, Condition \eqref{eq:onlcond_non_reneging} is necessary for stability.

\begin{lemma}
	If $(G, \mu)$ is stabilisable, then Condition \eqref{eq:onlcond_non_reneging} is satisfied.
\end{lemma}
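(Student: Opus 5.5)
The plan is to derive Condition \eqref{eq:onlcond_non_reneging} from the linear-algebraic characterisation of stabilisability in the non-reneging case, namely \cite[Condition (3)]{Nguyen2026a}. Stabilisability of $(G, \mu)$ gives that $G$ is surjective and that $G\lambda = \mu$ admits a positive solution $\lambda \in \R_{>0}^E$. The equivalent assignment-rate form \cite[Condition (1)]{Nguyen2026a}, shown equivalent in the previous paper, can be used as well.

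The first step is the base point. Set $\nu^0_{v,e} = e_v \lambda_e$ and $\phi^0_e = \lambda_e$. Since $\sum_{e \ni v} e_v \lambda_e = \mu_v$, the vector $\nu^0$ is an assignment rate, and $\frac{\nu^0_{v,e}}{e_v} = \phi^0_e$ holds with equality for every $(v,e) \in G$. This is exactly the requirement for pairs outside $U$.

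The second step is a perturbation. Fix a $G$-support $U = U_+ \sqcup U_-$. We look for $\nu = \nu^0 + \delta$ and $\phi = \phi^0 + \eta$ with $\delta$, $\eta$ small, satisfying the following:
\begin{itemize}
\item $\sum_{e\ni v}\delta_{v,e} = 0$ for every $v$;
\item $\delta_{v,e}/e_v - \eta_e < 0$ on $U_+$, $> 0$ on $U_-$, and $= 0$ off $U$.
\end{itemize}
Since every $\nu^0_{v,e} > 0$, small perturbations keep $\nu \geq 0$, so only the signs matter. Put $x_{v,e} = \delta_{v,e}/e_v - \eta_e$. Then $\delta_{v,e} = e_v(\eta_e + x_{v,e})$, and the balance constraint becomes
\[
\sum_{e \ni v} e_v \eta_e = -\sum_{e\ni v} e_v x_{v,e}, \qquad\text{that is,}\qquad G\eta = -\bigl(\textstyle\sum_{e \ni v} e_v x_{v,e}\bigr)_{v\in V}.
\]
Choose any $x$ with $x_{v,e} = -1$ on $U_+$, $x_{v,e} = +1$ on $U_-$ and $x_{v,e} = 0$ off $U$, then scale it. Surjectivity of $G$ provides an $\eta$ solving this equation. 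Scaling $x$ (and hence $\eta$) by a small $\varepsilon > 0$ keeps $\nu$ nonnegative. The resulting $\nu$ and $\phi$ depend only on $U$, which proves the condition.

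The main obstacle is that this argument never uses the fact that $U$ is a $G$-support. If that is correct, the condition reduces to surjectivity plus existence of a positive solution, which is suspicious but plausible: the deterministic-shift freedom in $\phi$ absorbs the hyperedge-wide component. The point to double-check is whether \cite[Condition (3)]{Nguyen2026a} indeed yields both surjectivity and a strictly positive $\lambda$ from stabilisability. Positivity is needed so that negative perturbations on $U_+$ remain feasible. If only $\lambda \geq 0$ were available, I would instead argue via \cite[Condition (1)]{Nguyen2026a} applied to $U_+$, taking $\phi_e = \nu_e/|e|$ and adjusting it off $U$.
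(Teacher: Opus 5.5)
Your proposal is correct and is essentially the paper's proof. The paper likewise takes the base point $\nu_{u,e} = e_u\lambda_e$, $\phi_e = \lambda_e$ from the linear-algebraic characterisation (surjectivity of $G$ plus a strictly positive $\lambda$). It then prescribes the gap $\nu_{u,e}/e_u - \phi_e$ to be $-\varepsilon$ on $U_+$, $+\varepsilon$ on $U_-$ and $0$ elsewhere, restores the balance $\sum_{e\ni u}\nu_{u,e} = \mu_u$ through a preimage under $G$ added to both $\nu/e$ and $\phi$, and takes $\varepsilon$ small for nonnegativity.

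As you suspected, the $G$-support property plays no role there either. Your weighted balance equation $G\eta = -\bigl(\sum_{e\ni v} e_v x_{v,e}\bigr)_{v}$ is also cleaner bookkeeping than the paper's version, which contains small sign and weight slips.
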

\begin{proof}
	If $(G, \mu)$ is stabilisable, ideally the matching process $M$ would capture the matchings themselves, in the sense that every matching is reflected in the change of $M$. As such, its increment $\phi_e$ can be interpreted as the matching rate $\lambda_e$, given by a solution to the conservation equation $G\lambda = \mu$.
	
	With this observation as our starting point, we use the linear algebraic characterisation of stability \cite[Condition (3)]{Nguyen2026a}, namely the stability of $(G, \mu)$ is equivalent to the surjectivity of $G$ and the existence of a positive solution $\lambda \in \R_{> 0}^E$ to the conservation equation $G\lambda = \mu$. Much of this proof is similar to that of the stability condition involving assignment rates alone \cite[Lemma 5.2]{Nguyen2026a}.
	
	Let $U$ be a $G$-support, partitioned into two smaller $G$-supports $U = U_+ \sqcup U_-$. For some $\varepsilon > 0$ to be defined later, let 
	\[
		\zeta_{u, e} = 
		\begin{cases}
			-\varepsilon & \text{ if } (u, e) \in U_+ \\
			0 & \text{ if } (u, e) \not\in U \\
			\varepsilon & \text{ if } (u, e) \in U_- \\
		\end{cases}.
	\]
	This induces a vector $\eta \in \R^V$ by $\eta_u = \sum_{e \ni u} \zeta_{u, e}$. Since $G$ is surjective, there exists $\xi \in \R^E$ such that $G \xi = \eta$.
	
	Now for $(u, e) \in G$, let $\nu_{u, e} = e_u (\lambda_e - \zeta_{u, e} + \xi_e)$ and $\phi_e = \lambda_e + \xi_e$, we will show that for suitably chosen $\varepsilon$, $\nu$ and $\phi$ satisfy Condition \eqref{eq:onlcond_non_reneging} for $U$.
	
	Already, by construction, we have that for all $u \in V$, we have
	\begin{align*}
		\sum_{e \ni u} \nu_{u, e}
		& = \sum_{e \ni u} e_u \left(\lambda_e + \zeta_{u, e} - \xi_e\right) \\
		& = \sum_{e \ni u} e_u\lambda_e + \sum_{e \ni u} e_u\zeta_{u, e} - \sum_{e \ni u} e_u\xi_e\\
		& = \mu_u + \eta_u - \eta_u = \mu_u.
	\end{align*}
	Moreover, we also have
	\[
		\frac{\nu_{u, e}}{e_u} - \phi_e = \lambda_e - \zeta_{u, e} + \xi_e - (\lambda_e + \xi_e) = \zeta_{u, e},
	\]
	so in particular, $\frac{\nu_{u, e}}{e_u} - \phi_e = -\varepsilon < 0$ if $(u, e) \in U_+$, $\frac{\nu_{u, e}}{e_u} - \phi_e = \varepsilon > 0$ if $(u, e) \in U_-$, and $\frac{\nu_{u, e}}{e_u} - \phi_e = 0$ if $(u, e) \not\in U$.
	
	Finally, note that $\|\eta\|_1 = |U|\varepsilon$. Since $G$ is a linear map, it is continuous, hence, for $\varepsilon > 0$ small enough, we have that $\|\xi\|_\infty \leq -\varepsilon + \min_{(u, e) \in G} \lambda_e$. This implies that for all $(u, e) \in G$.
	\[
		\nu_{u, e} = \lambda_e - \zeta_{u, e} + \xi_e \geq \lambda_e - \varepsilon - \|\xi\|_\infty \geq 0,
	\]
	thus $\nu$ is an assignment rate, as desired.
\end{proof}

It turns out, as expected, that Condition \eqref{eq:onlcond_non_reneging} is also sufficient, but the proof now relies on a different Lyapunov function whose negative drift is generated by this Condition.

Recall that the imbalance in this case is $\frac{X_{u, e}(t)}{e_u} - M_e(t)$. Naturally, as we did with the criterion involving only the assignment rates, to measure the total imbalance, it suffices to consider the sum of squares. Thus, our Lyapunov function is given by
\[
	L_+(X(t), M(t)) = \sum_{e \in E} \sum_{u \in e} \left(\frac{X_{u, e}(t)}{e_u} - M_e(t)\right)^2.
\]

\begin{lemma}
	\label{lemma: sufficiency of onlcond_non_reneging}
	If Condition \eqref{eq:onlcond_non_reneging} is satisfied, then $(G, \mu)$ is stabilisable.
\end{lemma}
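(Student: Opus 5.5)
We construct a policy on the enlarged state $(X, M)$ and show that $L_+$ has negative drift outside a compact set. Define the imbalance coordinates $Y_{u,e}(t) = \frac{X_{u,e}(t)}{e_u} - M_e(t)$ and the $G$-support $U(t) = U_+(t) \sqcup U_-(t)$ they induce. The sets $U_\pm$ are $G$-supports because, right after matchings are formed, $\min_{u \in e} \lfloor X_{u,e}/e_u \rfloor = 0$, so each $e$ contains some $u$ with small $X_{u,e}$; we arrange this $u$ to lie outside $U$, or on the side consistent with the definition.

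The policy is as follows. At time $t$, read $U(t)$ and take the $\nu = \nu^{U}$ and $\phi = \phi^U$ given by Condition \eqref{eq:onlcond_non_reneging}. Assign the arriving batch according to $\nu$, and set $\Delta M_e(t) = \phi_e$. Since there are finitely many $G$-supports, the margin
\[
\delta = \min_U \min_{(v,e) \in U} \left| \frac{\nu^U_{v,e}}{e_v} - \phi^U_e \right| > 0
\]
is uniform, and $\max_U \|\phi^U\|_\infty$ is finite.

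Next, compute the one-step drift. The realised matchings subtract $\min_u \lfloor B_{u,e}/e_u \rfloor$ from every $X_{u,e}/e_u$ and from $M_e$ alike. They therefore cancel in $Y$, which is exactly the purpose of introducing $M$. Hence
\[
\Delta Y_{u,e} = \frac{k_u \mathbbm{1}\{\text{batch assigned to } e\}}{e_u} - \phi_e .
\]
Expanding the square gives
\[
\E[\Delta L_+ \mid X, M] = \sum_{(u,e)} 2 Y_{u,e}\left(\frac{\nu_{u,e}}{e_u} - \phi_e\right) + \E\left[\sum (\Delta Y_{u,e})^2\right].
\]
The second term is bounded by a constant $C$ depending on $\|\mu\|_2^2$, the weights, and $\max\|\phi\|$. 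In the first term, every pair with $Y_{u,e} > 0$ (in $U_+$) contributes $-2\delta |Y_{u,e}|$, every pair in $U_-$ also contributes $-2\delta |Y_{u,e}|$, and pairs outside $U$ contribute $0$. Hence
\[
\E[\Delta L_+] \le C - 2\delta \|Y\|_1 \le C - 2\delta \sqrt{L_+},
\]
which is negative once $L_+$ is large.

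Finally, transfer this to $X$. Because matchings are formed greedily, some $u \in e$ has $X_{u,e} < e_u$. Then $M_e \le |Y_{u,e}| + 1$, and also $M_e \ge -|Y_{u,e}|$. So for every $v \in e$,
\[
\frac{X_{v,e}}{e_v} \le |Y_{v,e}| + |Y_{u,e}| + 1,
\]
and bounded $L_+$ forces bounded $X$. Foster's criterion \cite[Theorem 2.1]{Yuksel2013} applied to $(X, M)$, or directly to the sublevel sets of $L_+$, which project onto compact sets in $X$, yields stability.

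The main obstacle is the sign bookkeeping near $Y_{u,e} = 0$. Pairs with $Y_{u,e} = 0$ need exact equality $\nu/e_u = \phi_e$, which the condition supplies. The harder point is ensuring that $U(t)$ is genuinely a $G$-support at every step, since the condition is only assumed for $G$-supports. If $Y_{u,e} \ne 0$ for all $u \in e$ can occur, with $M_e$ drifting relative to the minimum, I would add a resetting rule: after matchings, shift $M_e$ so that $\min_{u} Y_{u,e} = 0$. This adds only a nonpositive term to $L_+$ when done as a projection, or a bounded term otherwise, and it makes $U$ a proper $G$-support. This step requires the most care.
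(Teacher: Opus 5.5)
Your architecture is the paper's. You use the same policy: read the sign pattern of $Y_{u,e}=X_{u,e}/e_u-M_e$, take $(\nu,\phi)$ from the condition, and increment $M_e$ by $\phi_e$. You also use the same $L_+$, the same cancellation of matchings inside $Y$, and the same transfer back to $X$ through a coordinate with $X_{u,e}<e_u$. Your uniform bound via $\max_U\|\phi^U\|_\infty$ is cleaner than the paper's $|\phi_e|\le 2\lambda_e$, which tacitly borrows the construction from the necessity proof.

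The gap is the step you flag yourself. Your first justification is wrong. The fact that $X_{u,e}<e_u$ says nothing about whether $Y_{u,e}=0$, because $M_e$ is a free real-valued process ($M(0)$ is arbitrary and the increments $\phi_e$ are real). Generically every $Y_{u,e}\neq 0$, so the sign pattern covers whole hyperedges and is not a $G$-support. The paper's proof has the same hole: it simply asserts that $U$ is a proper $G$-support.

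Your resetting rule does not repair this. Let $d=\min_u Y_{u,e}$. Shifting $M_e$ so that $\min_u Y_{u,e}=0$ changes the $e$-block of $L_+$ by $-2d\sum_{u\in e}Y_{u,e}+|e|d^2$. For $d<0$ this is positive and of order $|d|\sum_{u\in e}Y_{u,e}$, so it is not bounded. Nor is it a projection: the $L_+$-minimising $M_e$ is the mean of the $X_{u,e}/e_u$, which leaves mixed signs. Right after a reset, the next step gives $d=-\phi_e$ whenever the minimising class receives nothing in $e$. Since $\phi_e$ need not be small compared with your margin $\delta$, this cost can swamp the $-2\delta\|Y\|_1$ gain. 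Resetting at every step in fact forces $M_e=\min_u X_{u,e}/e_u$, which brings back exactly the uncontrolled drift of the minimum that $M$ was introduced to avoid.

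The repair needs no change to the dynamics. First, pairs with $Y_{u,e}=0$ contribute $2Y_{u,e}(\nu_{u,e}/e_u-\phi_e)=0$ whatever $\nu$ is. So, contrary to your last paragraph, the equality clause of the condition is never needed.

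Second, take an arbitrary sign pattern $(U_+,U_-)$ and proceed as follows.
\begin{itemize}
\item From every hyperedge $e$ entirely contained in $U_+\sqcup U_-$, remove one pair $(u_0,e)$. What remains is a $G$-support, so the condition yields $(\nu,\phi)$ with the strict inequalities there and $\nu_{u_0,e}/e_{u_0}=\phi_e$.
\item Replace $\phi_e$ by $\phi_e+\eta$ if $(u_0,e)\in U_+$, or by $\phi_e-\eta$ if $(u_0,e)\in U_-$. Choose $\eta>0$ smaller than the gaps $|\nu_{w,e}/e_w-\phi_e|$ of the pairs of $e$ lying on the opposite side; any $\eta>0$ works if there are none.
\end{itemize}
All strict inequalities then hold on $U_+\sqcup U_-$. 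The rate $\nu$ is unchanged and still an assignment rate, and $\phi$ carries no sign constraint. There are finitely many sign patterns, so the margin stays uniform, and your drift bound $C-2\delta\|Y\|_1$ and transfer to $X$ go through verbatim.
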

\begin{proof}
	Our policy $\Phi$ is defined as usual in the proofs of sufficiency: we partition the initial buffer $Z(0)$ into $X(0)$ and choose $M(0)$ arbitrarily. Then, at time $t$, we choose an assignment rate $\nu$ and the change $\phi$ corresponding to the $G$-support $U$ of $X(t)$ given by Condition \eqref{eq:onlcond_non_reneging}, and proceed as an online assignment policy.
	
	To show that $\Phi$ stabilises $(G, \mu)$, consider the buffer $X(t) = x \in \R_{\geq 0}^G$ at time $t$. For $(u, e) \in G$, assume that $k_{u, e}$ class-$u$ items arrive and are assigned to $e$. We have that the change $\Delta \left[\left(\frac{X_{u, e}(t)}{e_u} - M_e(t)\right)^2\right]$ of the term $\left(\frac{X_{u, e}(t)}{e_u} - M_e(t)\right)^2$ is given by
	\begin{align*}
		\Delta \left[\left(\frac{X_{u, e}(t)}{e_u} - M_e(t)\right)^2\right]
		& = \left(\frac{k_{u, e}}{e_u} - \phi_e\right)^2 + 2\left(\frac{k_{u, e}}{e_u} - \phi_e\right)\left(\frac{x_{u, e}}{e_u} - m_e\right) \\
		& \leq \frac{k_{u, e}^2}{e_u^2} + \phi_e^2 + 2\left(\frac{k_{u, e}}{e_u} - \phi_e\right)\left(\frac{x_{u, e}}{e_u} - m_e\right).
	\end{align*}
	
	By linearity, we have
	\[
		\Delta L_+ \leq \sum_{e \in E} \sum_{u \in e} \frac{k_{u, e}^2}{e_u^2} + \phi_e^2 + 2\left(\frac{k_{u, e}}{e_u} - \phi_e\right)\left(\frac{x_{u, e}}{e_u} - m_e\right).
	\]
	Note that $0 \leq k_{u, e}$ and $\sum_{e \ni u} k_{u, e} = k_u$ by definition, where $k_u$ is the number of arriving class-$u$ items, implying that $\sum_{e \ni u} k_{u, e}^2 \leq \left(\sum_{e \ni u} k_{u, e}\right)^2 = k_u$. Denote $e_G = \min_{(u, e) \in G} e_u$ and taking the expectation over the arrivals and the assignments, we have
	\begin{align*}
		& \E[\Delta L_+ \mid X(t) = x, M(t) = m]\\
		\leq & \E\left[\sum_{e \in E} \sum_{u \in e} \frac{k_{u, e}^2}{e_u^2} + \phi_e^2 + 2\left(\frac{k_{u, e}}{e_u} - \phi_e\right)\left(\frac{x_{u, e}}{e_u} - m_e\right) \biggr\vert X(t) = x, M(t) = m\right] \\
		\leq & \E\left[\sum_{u \in e} \frac{k_u^2}{e_G^2} \biggr\vert X(t) = x, M(t) = m\right] \\
		& + \sum_{e \in E} \left(|e|\phi_e^2 + \sum_{u \in e} 2\left(\frac{\nu_{u, e}}{e_u} - \phi_e\right)\left(\frac{x_{u, e}}{e_u} - m_e\right)\right)\\
		\leq & \frac{\|\mu\|^2_2}{e_G^2} + \sum_{e \in E} \left(|e|\phi_e^2 + \sum_{u \in e} 2\left(\frac{\nu_{u, e}}{e_u} - \phi_e\right)\left(\frac{x_{u, e}}{e_u} - m_e\right)\right).
	\end{align*}
	By Condition \eqref{eq:onlcond_non_reneging}, we have 
	\[
		\left(\frac{\nu_{u, e}}{e_u} - \phi_e\right)\cdot \text{sgn}\left(\frac{x_{u, e}}{e_u} - m_e\right) < 0
	\]
	for $(u, e) \in U$. As such, considering all $(u, e) \in U$, we can choose $\varepsilon > 0$ so that 
	\[
		\left(\frac{\nu_{u, e}}{e_u} - \phi_e\right)\cdot \text{sgn}\left(\frac{x_{u, e}}{e_u} - m_e\right) \leq -\varepsilon.
	\]
	Moreover, since $G$ is finite, there are finitely many $G$-support $U$ and its partitions, so we may even choose $\varepsilon > 0$ to be uniform for all $U$, which means it depends only on $G$ and $\nu$.
	
	Plugging this back into $\E[\Delta L_+ \mid X(t) = x, M(t) = m]$, we obtain
	\[
		\E[\Delta L_+ \mid X(t) = x, M(t) = m] \leq \frac{\|\mu\|^2_2}{e_G^2} + \sum_{e \in E} |e|\phi_e^2 - 2\varepsilon\sum_{(u, e) \in G} \left|\frac{x_{u, e}}{e_u} - m_e\right|.
	\]
	
	Note that $|\phi_e| \leq \lambda_e + |\xi_e| \leq 2\lambda_e$, so let 
	\[
		C = \frac{\|\mu\|^2_2}{e_G^2} + 4\sum_{e \in E} |e|\lambda_e^2 \geq \frac{\|\mu\|^2_2}{e_G^2} + \sum_{e \in E} |e|\phi_e^2
	\]
	and $\delta > 0$ be arbitrarily chosen. By triangle inequality, we have a trivial bound
	\[
		\sum_{(u, e) \in G} \left|\frac{x_{u, e}}{e_u} - m_e\right| \geq \sum_{e \in E} \max_{u \in e} x_{u, e} - \min_{v \in e} \frac{x_{v, e}}{e_v}.
	\]
	Now note that by our definition of online assignment policies, we always have $\min_{v \in e} \frac{x_{v, e}}{e_v} < 1$. If $\|x\|_1 \geq \|G\|_1 \left(1 + \frac{C + \delta}{2\varepsilon}\right)$, then by pigeonhole principle, there exists $(u^*, e) \in G$ such that $\frac{x_{u^*, e}}{e_u} \geq 1 + \frac{C + \delta}{2\varepsilon}$, implying 
	\[
		\max_{u \in e} \frac{x_{u, e}}{e_u} - \min_{v \in e} \frac{x_{v, e}}{e_v} \geq \max_{u \in e} \frac{x_{u, e}}{e_u} - 1 \geq \frac{x_{u^*, e}}{e_u} - 1\geq \frac{C + \delta}{2\varepsilon},
	\]
	and hence
	\[
		\E[\Delta L_+ \mid X(t) = x, M(t) = m] \leq C - 2\varepsilon\sum_{(u, e) \in G} \left|\frac{x_{u, e}}{e_u} - m_e\right| \leq C - 2\varepsilon \frac{C + \delta}{2\varepsilon}.
	\]
	
	This means that for all $x \not\in C_\delta$ where
	\[
		C_\delta = \left\{x \biggr\vert \|x\|_1 <  \|G\|_1 \left(1 + \frac{C + \delta}{2\varepsilon}\right)\right\},
	\]
	we have $\E[\Delta L_+ \mid X(t) = x, M(t) = m] \leq -\delta$, implying for all $T \geq 0$, 
	\[
		\E[\min\{t \geq 0 \mid X(T + t) \in C_\delta\} \mid \mathcal{H}_T] \leq \frac{L_+(X(t), M(t))}{\delta},
	\]
	as desired.
\end{proof}

As such, we obtain a new characterisation of stability.

\begin{theorem}
	Let $(G, \mu)$ be a non-reneging matching model. Condition \eqref{eq:onlcond_non_reneging} is necessary and sufficient for its stability.	
\end{theorem}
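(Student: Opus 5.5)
The theorem is the combination of the two lemmas proved just before it, so the plan is to chain them carefully rather than build a new argument.

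\emph{Necessity.} This is the unnamed lemma preceding Lemma \ref{lemma: sufficiency of onlcond_non_reneging}. First, stability of $(G,\mu)$ gives, by the linear algebraic characterisation \cite[Condition (3)]{Nguyen2026a}, that $G$ is surjective and that $G\lambda=\mu$ has a positive solution $\lambda$. Next, fix an arbitrary $G$-support $U=U_+\sqcup U_-$ and perturb $\lambda$. Let $\zeta_{u,e}=\mp\varepsilon$ on $U_\pm$ and $0$ off $U$. Let $\eta_u=\sum_{e\ni u}\zeta_{u,e}$. By surjectivity, choose $\xi$ with $G\xi=\eta$.

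Set $\phi_e=\lambda_e+\xi_e$ and define $\nu_{u,e}$ so that $\nu_{u,e}/e_u-\phi_e=\zeta_{u,e}$; the lemma's displayed formula has sign typos, so this defining identity is what we rely on. This gives the required signs on $U_+$ and $U_-$ and equality off $U$. The conservation identity $\sum_{e\ni u}\nu_{u,e}=\mu_u$ then follows from $G\lambda=\mu$ together with $G\xi=\eta$, since the perturbations cancel.

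The one delicate point is nonnegativity of $\nu$. Since $\lambda>0$, choose $\xi$ depending linearly, hence continuously, on $\eta$, for instance through a fixed right inverse of $G$. Then $\|\xi\|_\infty=O(\varepsilon)$, and taking $\varepsilon$ small makes every $\nu_{u,e}\ge0$.

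\emph{Sufficiency.} This is Lemma \ref{lemma: sufficiency of onlcond_non_reneging}. Run the online assignment policy that, at each step, reads the support $U$ determined by $(X(t),M(t))$ and uses the $(\nu,\phi)$ supplied by Condition \eqref{eq:onlcond_non_reneging}. Take $L_+=\sum_e\sum_{u\in e}(X_{u,e}/e_u-M_e)^2$ as the Lyapunov function.

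Expand the one-step drift and note that the matching terms cancel, because they shift $X_{u,e}/e_u$ and $M_e$ by the same amount. This bounds the drift by a constant depending on $\|\mu\|_2^2$ and $\max|\phi|$, minus $2\varepsilon\sum|X_{u,e}/e_u-M_e|$. Here $\varepsilon$ is uniform because there are only finitely many supports and partitions, each with a fixed choice of $(\nu,\phi)$.

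Since the policy matches greedily, $\min_{v\in e}X_{v,e}/e_v<1$. Hence a large $\|X\|_1$ forces a large gap within some hyperedge, and the triangle inequality makes $\sum|X_{u,e}/e_u-M_e|$ large. Foster's criterion then gives a finite expected return time to a compact set in $X$.

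\emph{Main obstacle.} The hardest step is this last one: the Lyapunov function controls the joint state $(X,M)$, while stability is stated in terms of $X$ alone. I would handle it by requiring only that the return set be bounded in $X$, since the drift bound applies whenever $X$ is large regardless of $M$. Hitting times then follow from the standard supermartingale bound $L_+/\delta$, which is finite given finite $M(0)$.
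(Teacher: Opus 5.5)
Your proposal is correct and follows the paper's route: the theorem is exactly the conjunction of the preceding necessity lemma and Lemma~\ref{lemma: sufficiency of onlcond_non_reneging}. For necessity, you perturb a positive solution of $G\lambda=\mu$ and restore conservation by surjectivity, and your fixed right inverse is a sounder way to get $\|\xi\|_\infty=O(\varepsilon)$ than the paper's appeal to continuity of $G$. For sufficiency, you use the drift of $L_+$ together with $\min_{v\in e}X_{v,e}/e_v<1$, as the paper does.

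There are two small repairs, and the paper needs both as well. First, with $\phi=\lambda+\xi$ and $\nu_{u,e}/e_u-\phi_e=\zeta_{u,e}$, the perturbations cancel only if you take $G\xi=-\eta$ with the weighted $\eta_u=\sum_{e\ni u}e_u\zeta_{u,e}$; as written, with $G\xi=\eta$ and unweighted $\eta$, they do not cancel. Second, the set $U$ read off from $(X,M)$ is generically not a $G$-support, so Condition \eqref{eq:onlcond_non_reneging} does not supply $(\nu,\phi)$ for it directly. You can fix this because $\phi$ is unconstrained: for each hyperedge lying entirely in $U$, move one pair to the zero set and apply the condition to the resulting $G$-support. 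Then shift that $\phi_e$ slightly in the direction required by the moved pair's sign, by less than the smallest strict gap in that hyperedge.
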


\begin{remark}
	It should be pointed out that we could have established the necessity of Condition \eqref{eq:onlcond_non_reneging} without relying on the other criteria we had obtained in the previous paper. Indeed, it suffices to use similar arguments and establish the necessity within the class of online assignment policies, then introduce the reassignment and carry out analogous proofs, and finally pass by the linear algebraic characterisation of stability \cite[Condition (3)]{Nguyen2026a}. 
	
	Here, we choose not to do so for the sake of brevity. As the most essential part, the linear algebraic characterisation, was established, we may avoid all the technical arguments of the approach above.
\end{remark}
	\section{Sufficiency of Condition \eqref{eq: ncond} and MaxWeight policy}
\label{section: sufficiency}

\subsection{Sufficiency of Condition \eqref{eq: ncond}}

Armed with the notion of matching process, we are now ready to prove the sufficiency of Condition \eqref{eq: ncond}. The idea is to argue for the negative drift of some suitable Lyapunov function as outlined at the end of Section \ref{section: necessity}, where we will replace the term $\min_{u \in e^n} \left\lfloor \frac{x_{u, e}}{e_u}\right\rfloor$ with $M_e^n(t)$.

To highlight the relation between the matching process $M$ and the non-reneging part $(G^n, \mu)$ of the model, we will denote hereinafter the matching process with a superscript, as $M^n$.

\begin{lemma}
	\label{lemma: sufficiency of ncond}
	Condition \eqref{eq: ncond} is sufficient for the stability of $(G, \mu, R)$.
\end{lemma}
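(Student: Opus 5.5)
We construct an online assignment policy on the enlarged state $(X, M^n)$ and show that a Lyapunov function built from three pieces has negative drift over a stopping time of finite mean. The pieces are an imbalance term for the non-reneging part, a term coupling non-reneging and reneging parts, and a quadratic term damping the reneging queues. We then conclude with the randomised multi-step Foster criterion \cite[Theorem 2.1]{Yuksel2013}, exactly as in the proof of Lemma \ref{lemma: stability characterisation for Y-topology}.

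\textbf{The policy.} Assume Condition \eqref{eq: ncond}. The first half gives the stability of $(G^n,\mu)$, so by the theorem of Section \ref{section: matching process}, Condition \eqref{eq:onlcond_non_reneging} holds for $G^n$. Hence, for each $G^n$-support $U=U_+\sqcup U_-$ of the current imbalance $\frac{X_{u,e}}{e_u}-M^n_e$, we obtain an assignment rate $\nu^n$ of the non-reneging classes and increments $\phi=\lambda^n+\xi$, with $\|\xi\|_\infty$ as small as we like. For the reneging classes we use the fixed assignment rate $\nu^r$ from Condition \eqref{eq: ncond}; this makes the reneging-part generating process $Z'$ (as in the proof of Lemma \ref{lemma: necessity of ncond}) an autonomous ergodic chain with part-generation rates $\lambda^r_e$. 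Type-$e$ matchings are formed whenever a non-reneging part and a reneging part are both present, and $M^n_e$ is decreased by each matching and increased by $\phi_e$.

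\textbf{The Lyapunov function.} Take
\[
L=\sum_{e}\Bigl[\sum_{u\in e^n}\Bigl(\tfrac{x_{u,e}}{e_u}-M^n_e\Bigr)^2 + C_0\Bigl(M^n_e-\min_{v\in e^r}\bigl\lfloor\tfrac{x_{v,e}}{e_v}\bigr\rfloor\Bigr)+\sum_{v\in e^r}\tfrac{x_{v,e}^2}{\alpha_v}\Bigr].
\]
The three terms are handled as follows.
\begin{itemize}
\item The first term's drift is bounded exactly as in Lemma \ref{lemma: sufficiency of onlcond_non_reneging}, by $C-2\varepsilon\sum|\frac{x_{u,e}}{e_u}-m_e|$. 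It therefore controls the non-reneging queues up to the common level $M^n_e$. Matchings shift all $\frac{x_{u,e}}{e_u}$ and $M^n_e$ together, so they do not affect this term.
\item The reneging quadratic term is treated as in the $Y$-topology case: completing squares gives $\text{const}-\sum(x_{v,e}-\mu\text{-dependent const})^2$.
\item The middle term is the one requiring care. When $M^n_e$ is large (non-reneging parts accumulate), $M^n_e$ gains $\phi_e\approx\lambda^n_e$ per step. Over the excursion time $\tau$ until a reneging part forms, whose mean is $1/\lambda^r_e$ by definition, a matching then consumes one unit. By Wald's identity, the expected change over $[t,t+\tau]$ is $\approx\lambda^n_e/\lambda^r_e-1$.
\end{itemize}
Note that Condition \eqref{eq: ncond} only gives $\lambda^n_e\le\lambda^r_e$, not strict inequality. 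I would first try to obtain strictness by perturbing: since $\lambda^n$ ranges over a polytope of positive solutions and $\phi=\lambda^n+\xi$ with $\xi$ small, a generic choice should give $\lambda^n_e<\lambda^r_e$ whenever $\lambda_e^n>0$ can be lowered. If this fails, the equality case must be handled separately, and I expect it to need a second-order (null-recurrent-type) argument, which is a genuine gap to watch.

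\textbf{Main obstacle.} The hardest step is combining the drifts over a \emph{single} stopping time when different hyperedges have different excursions, and when $M^n_e$ is small for some $e$ but large for others. I would take $\tau$ to be a fixed large horizon $T$ rather than per-edge excursion times. The ergodic theorem for the autonomous finite-mean chain $Z'$ gives that the number of reneging parts generated in $[t,t+T]$ is $\approx T\lambda^r_e$ uniformly in the initial state of $Z'$, since the reneging queues are controlled by the quadratic term. Meanwhile the first-term drift is at most $CT$, which is bounded. With $C_0$ chosen large relative to $C/(1-\max_e\lambda^n_e/\lambda^r_e)$, the drift over $T$ is $\le-\delta$ outside a compact set: either some $M^n_e$ is large (middle term dominates), or some imbalance is large (first term dominates), or some reneging queue is large (quadratic term dominates).

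Two technical points remain. First, $M^n_e$ may become negative or the matching count may be capped by available parts. I would handle this by letting $\phi_e$ act only when $M^n_e>0$, which adds a bounded error.

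Second, $L$ must be bounded below for the Foster criterion to apply. The middle term is linear and could be negative, but it is bounded below by $-\sum_e\min_{v\in e^r}\lfloor x_{v,e}/e_v\rfloor$, which is dominated by the quadratic reneging term, so adding a constant makes $L$ nonnegative.
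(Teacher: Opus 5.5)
\textbf{There is a genuine gap in the combination step.} It sits exactly at the point you flag as the main obstacle.

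Take the case where some $M^n_e$ is large while all imbalances and reneging queues are moderate. Over your horizon $T$, the only negative contribution is that edge's middle term, of order $-C_0T(\lambda_e^r-\phi_e)$. All you can guarantee here is $\lambda_e^r-\phi_e\ge\varepsilon$, not $1-\lambda_e^n/\lambda_e^r$, because $\phi_e=\lambda_e^n+\xi_e$.

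The middle terms of the other edges carry the same factor $C_0$, and your proposal does not control them. A priori $M^n_{e'}$ gains $\phi_{e'}T$ over the window, while the matchings of $e'$ may lag behind. So each such edge can contribute up to $+C_0\phi_{e'}T$, and $\phi_{e'}\approx\lambda^n_{e'}$ is not small compared with $\varepsilon$. With only this trivial bound, no choice of $C_0$ or $T$ makes the large edge dominate.

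To close this you need a new estimate: for every edge not in the large regime, $\E[M^n_{e'}(t+T)-M^n_{e'}(t)]=o(T)$ uniformly over the relevant initial states. That is itself a stability statement for each edge's queue of non-reneging versus reneging parts, and the ergodic theorem for $Z'$ does not provide it.

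The paper never has to make one edge's decrease pay for another edge's increase. It reduces to a single hyperedge, treating the components $X^e$ as evolving separately under the policy. It then uses $L_+^e$ with the plain linear term $C_0m_e^n$ and runs the argument of Lemma~\ref{lemma: stability characterisation for Y-topology} edge by edge:
\begin{itemize}
\item If a reneging class of $e$ is short, wait until the next reneging part of $e$ forms. During this time $M_e^n$ grows by at most $\tau(\lambda_e^r-\varepsilon)$, and then it loses one unit.
\item Otherwise, use the one-step drift of the two quadratic terms.
\end{itemize}

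\textbf{Two further corrections.}

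First, the equality case cannot be rescued. In the $Y$-topology, $\lambda^n_e=\mu_u$ is forced and $\nu^r$ has no freedom. The proof of Lemma~\ref{lemma: stability characterisation for Y-topology} shows that $\lambda_e^r=\lambda_e^n$ already makes $Z_u$ a submartingale, so the model is unstable. Hence neither a perturbation nor a null-recurrent-type argument can exist, and \eqref{eq: ncond} has to be read with strict inequality. This is what the paper's proof actually uses: it picks $\nu^r$ with $\lambda_e^r>\lambda_e^n$, then $\varepsilon$ with $\phi_e\le\lambda_e^r-\varepsilon$. It is also the form stated in Theorem~\ref{theorem: necessity and sufficiency of ncond for general-weight case}.

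Second, switching $\phi_e$ off when $M^n_e\le 0$ does not add only a bounded error. When $M^n_e\le 0$, every non-reneging pair $(u,e)$ lies in $U_+$ (or has zero imbalance). Yet its imbalance then drifts upward at rate $\nu_{u,e}/e_u$. This gives an error in the first term that is linear in the imbalance, while matchings keep pushing $M^n_e$ further down.

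The switch is also unnecessary. For $m<0$ one has $(x_{u,e}/e_u-m)^2\ge m^2$, so the imbalance term plus $C_0m$ is bounded below by $-C_0^2/4$. You can therefore keep $\phi_e$ always active, as the paper does.
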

\begin{proof}
	For the sake of clarity, we will first describe a Markovian online assignment policy $\Phi$ which we will show to stabilise $(G, \mu, R)$.
	
	Consider the buffer state $X(t) = x$ and the matching process $M^n(t) = m^n$ at time $t$, let $U = \{(u, e) \mid u \in e^n \wedge x_{u, e} \geq e_u\}$ be the $G^n$-support corresponding to $x$ and $m$ given by
	\[
		U = \left\{(v, e) \biggr\vert \frac{X_{v, e}(t)}{e_v} \neq M_e^n(t)\right\},
	\]
	which is then partitioned into two smaller $G^n$-support $U = U_+ \sqcup U_-$ given by
	\[
		U_+ = \left\{(v, e) \biggr\vert \frac{X_{v, e}(t)}{e_v} > M_e^n(t)\right\},
	\]
	and
	\[
		U_- = \left\{(v, e) \biggr\vert \frac{X_{v, e}(t)}{e_v} < M_e^n(t)\right\}.
	\]
	
	For some $\varepsilon > 0$ to be defined later, let
	\[
		\zeta_{u, e} = 
		\begin{cases}
			-\varepsilon & \text{ if } (u, e) \in U_+ \\
			0 & \text{ if } (u, e) \in G^n \setminus U \\
			\varepsilon & \text{ if } (u, e) \in U_- \\
		\end{cases}.
	\]
	This induces a vector $\eta \in \R^{V^n}$ by $\eta_u = \sum_{e \ni u} \zeta_{u, e}$. Since $G^n$ is surjective, there exists $\xi \in \R^E$ such that $G^n \xi = \eta$.
	
	Fix a positive solution $\lambda^n$ of the conservation equation for $(G^n, \mu)$, namely $G^n \lambda^n = \mu\vert_{\R^{V^n}}$. For $(u, e) \in G^n$, let $\nu_{u, e} = e_u (\lambda_e^n - \zeta_{u, e} + \xi_e)$ and $\phi_e = \lambda_e^n + \xi_e$.
	
	To complete the assignment rate $\nu$, for $(v, e) \in G^r$, let $\nu_{v, e}$ be chosen so that $\lambda_e^r > \lambda_e^n$, whose existence is given by Condition \eqref{eq: ncond}.
	
	Finally, since $G^n$ is a linear map, it is continuous, hence, for all $\varepsilon > 0$ small enough, we have that $\|\xi\|_\infty \leq -\varepsilon + \min_{(u, e) \in G} \lambda^n_e$. This implies that for all $(u, e) \in G^n$.
	\[
		\nu_{u, e} = \lambda_e^n - \zeta_{u, e} + \xi_e \geq \lambda_e^n - \varepsilon - \|\xi\|_\infty \geq 0,
	\]
	thus $\nu$ is an assignment rate. Moreover, for all $\varepsilon > 0$ small enough, we also have $\|\xi\|_\infty \leq -\varepsilon + \lambda_e^r - \lambda_e^n$. We choose $\varepsilon > 0$ small enough so that $\nu$ is an assignment rate and that this condition is satisfied. This gives the assignment rate $\nu$ for the buffer state $X(t) = x$ and $M^n(t) = m^n$, thus the resulting policy $\Phi$ is Markovian, and the chain $(X, M)$ is a Markov chain.
	
	To show that $\Phi$ stabilises $(G, \mu, R)$, we first remark that for each hyperedge $e$ the assignment rate $\nu_{v, e}$ for $u \in e^r$ is chosen using only the $G^n$-support within that hyperedge; the choice of $\lambda^n$, $\varepsilon$ are all fixed and can be made uniform. Thus, the dynamics of each hyperedge is independent, and we decompose the buffer $X(t)$ into the hyperedge components $X^e(t) = (X_{u, e}(t))_{u \in e}$, so that $X(t) = (X^e(t))_{e \in E}$ (note that we use superscript notation to distinguish from the subscript notation $X_e(t) = \sum_{u \in e} X_{u, e}(t)$).
	
	Suppose that each of the hyperedge component is stable, meaning for each $e$ and $T \geq 0$, there exist a constant $c_e$ (possibly depending on T) and a compact set $C_e$ such that $\E[\tau^e_{C_e} \mid \mathcal{H}_T] \leq c_e$, where
	\[
		\tau^e_{C_e} = \min\{t \mid X^e(T + t) \in C_e\},
	\]
	then for $C = \prod_{e \in E} C_e$ and
	\[
		\tau_C = \min\{t \mid X(T + t) \in C\},
	\]
	as $X^e$'s evolve independently, we have $\tau_C = \min_{e \in E} \tau^e_{C_e} \leq \sum_{e \in E} \tau^e_{C_e}$, implying
	\[
		\E[\tau \mid \mathcal{H}_T] \leq \sum_{e \in E} \E[\tau^e_{C_e} \mid \mathcal{H}_T] \leq \sum_{e \in E} c_e < \infty.
	\]
	so $X$ will be stable. As such, it suffices to focus on a given hyperedge $e$ and show that $X^e$ is stable.
	
	The remaining of the proof is analogous to that of Lemma \ref{lemma: stability characterisation for Y-topology}. Namely, we fix a hyperedge $e$ and consider the function
	\[
		L_+^e(x, m^n) = \sum_{\{u, v\} \in (e^n)^2} \left(\frac{x_{u, e}}{e_u} - m_e^n\right)^2 + \sum_{v \in e^r} \frac{x_{v, e}^2}{\alpha_v} + C_0 m_e^n
	\]
	for some suitable constants $C_0 > 0$ to be specified later.
	
	For the first term
	\[
		L_1^e (x, m) = \sum_{\{u, v\} \in (e^n)^2} \left(\frac{x_{u, e}}{e_u} - m_e^n\right)^2,
	\]
	reusing calculations from the proof of Lemma \ref{lemma: sufficiency of onlcond_non_reneging}, we have
	\begin{align*}
		\E[\Delta L_1^e] 
		& \leq \frac{\|\mu_e\|^2_2}{e_G^2} + 4 |e|\lambda_e^2 - 2\varepsilon \sum_{u \in e^n} \left| \frac{x_{u, e}}{e_u} - m_e\right| \\
		& = C_1 - 2\varepsilon \sum_{(u, e) \in G^n} \left| \frac{x_{u, e}}{e_u} - m_e\right|.
	\end{align*}
	where $\|\mu_e\|^2_2 = \sum_{u \in e} \mu_u^2 = \int_{\R^V} k_e^2 \dd \mu(k)$ and $C_1 = \frac{\|\mu_e\|^2_2}{e_G^2} + 4 |e|\lambda_e^2 $ is an absolute constant depending only on $(G, \mu, R)$.
	
	For the second term
	\[
		L_2^e(x) = \sum_{v \in e^r} \frac{x_{v, e}^2}{\alpha_v},
	\]
	we repeat the calculation from the proof of Lemma \ref{lemma: stability characterisation for Y-topology}. For a given $e \in E$ and $v \in e^r$, we have
	\[
		\Delta x_{v, e}^2 = 2x_{v, e} \Delta x_{v, e} + (\Delta x_{v, e})^2.
	\]
	where $\Delta x_{v, e} = k_{v, e} - R_v (x_{v, e} + k_{v, e}) - e_v m_e$ with $m_e$ being the number of matching realised at time $t$, $k_{v, e}$ being the number of class-$v$ items newly arrived and assigned to $e$, and $R_v (x_{v, e} + k_{v, e})$ being the number of class-$v$ items assigned to $e$ and reneging at time $t$.
	
	Expanding $(\Delta x_{v, e})^2$, we have
	\[
		(\Delta x_{v, e})^2 = (k_{v, e} - R_v k_{v, e} - e_v m_e)^2 + R_v^2 x_{v, e}^2 - 2R_v x_{v, e} (k_{v, e} - R_v k_{v, e} - e_v m_e).
	\]
	We can bound $m_e$ by noticing that any matchings at time $t$ must involve items arriving at time $t$. In particular, for some $w \in e$, we have $x_{w, e} < e_w$. After the arrivals, and even without the reneging, we have at most $x_{w, e} + k_{w, e} < e_w + k_{w, e}$ items, thus there are at most $1 + \frac{k_{w, e}}{e_w} < 1 + \frac{k_{w, e}}{e_G}$ matchings, where $e_G = \min_{w \in e} e_w$.
	
	This gives a bound on the term $(k_{v, e} - R_v k_{v, e} - e_v m_e)^2$, simply by expanding
	\begin{align*}
		(k_{v, e} - R_v k_{v, e} - e_v m_e)^2 
		& \leq k_{v, e}^2 + e_v^2 m_e^2 \\
		& \leq k_{v, e}^2 + 2 e_v^2\left(1 + \frac{k^2_{w, e}}{e_G^2}\right) \\
		& \leq \|k\|^2 + 2 e_v^2\left(1 + \frac{\|k\|_2^2}{e_G^2}\right),
	\end{align*}
	for some $w$ such that $x_{w, e} < e_w$, where the first step is by noticing that $0 \leq R_v \leq 1$ and $k_{v, e}, m_e, e_v \geq 0$, the second step is by applying Cauchy-Schwarz inequality, and in the third step, we denote $\|k\|^2_2 = \sum_{(w, e) \in G} k_{w, e}^2$.
	
	Similarly, for the term	$2R_v x_{v, e} (k_{v, e} - R_v k_{v, e} - e_v m_e)$, one can bound
	\begin{align*}
		-2R_v x_{v, e} (k_{v, e} - R_v k_{v, e} - e_v m_e)
		& \leq 2R_v x_{v, e} (k_{v, e} + e_v m_e) \\
		& \leq 2R_v x_{v, e} \left(k_{v, e} + 1 + \frac{k_{w, e}}{e_G}\right) \\
		& \leq 2R_v x_{v, e} \left(\|k\|_1 + 1 + \frac{\|k_e\|_1}{e_G}\right),
	\end{align*}
	where $\|k\|_1 = \sum_{(w, e) \in G} k_{w, e}$.
	
	All in all, together with the fact that $0 \leq R_v \leq 1$, so that $R_v^2 \leq R_v$, taking expectation, we have
	\begin{align*}
		\E[(\Delta x_{v, e})^2]
		\leq & \|\mu\|_2^2 \left(1 + 2\frac{e_v^2}{e_G^2}\right) + 2e_v^2 + \alpha_v x_{v, e}^2 \\ 
		& + 2x_{v, e}\left[\alpha_v\|\mu\|_1 \left(1 + \frac{1}{e_G}\right) + \alpha_v\right].
	\end{align*}
	
	For the term $2x_{v, e} \Delta x_{v, e}$, by definition, we have
	\begin{align*}
		2x_{v, e} \Delta x_{v, e}
		& = 2 x_{v, e} [k_{v, e} - R_v (x_{v, e} + k_{v, e}) - e_v m_e] \\
		& \leq 2x_{v, e} \|k\|_1 - 2R_v x_{v, e}^2.
	\end{align*}	
	Taking expectation, we have $\E[2x_{v, e} \Delta x_{v, e}] \leq 2\|\mu\|_1 x_{v, e} - 2\alpha_v x_{v, e}^2$. Combining with the bound on $\E[(\Delta x_{v, e})^2]$, we have
	\begin{align*}
		\E[\Delta x_{v, e}^2]
		\leq & -\alpha_v x_{v, e}^2 + 2x_{v, e}\left[\|\mu\|_1 + \alpha_v\|\mu\|_1 \left(1 + \frac{1}{e_G}\right) + \alpha_v\right] \\
		& + \|\mu\|_2^2 \left(1 + 2\frac{e_v^2}{e_G^2}\right) + 2e_v^2 \\
		\leq & -\alpha_v \left[x_{v, e} - \|\mu\|_1 \left(1 + \frac{1}{\alpha_v} + \frac{1}{e_G}\right) - 1\right]^2 \\
		& + \|\mu\|_2^2 \left(1 + 2\frac{e_v^2}{e_G^2}\right) + 2e_v^2 + \alpha_v \left(\|\mu\|_1 \left(1 + \frac{1}{\alpha_v} + \frac{1}{e_G}\right) + 1\right)^2
	\end{align*}
	and by linearity
	\begin{align*}
		\E[\Delta L_2^e]
		= & \E\left[\sum_{v \in e^r} \frac{x_{v, e}^2}{\alpha_v}\right]  \leq \sum_{v \in e^r} \left[-\left(x_{v, e} - \|\mu\|_1 \left(1 + \frac{1}{\alpha_v} + \frac{1}{e_G}\right) - 1\right)^2 \right.\\
		& \left. + \frac{1}{\alpha_v} \left(\|\mu\|_2^2 \left(1 + 2\frac{e_v^2}{e_G^2}\right) + 2e_v^2 + \alpha_v \left(\|\mu\|_1 \left(1 + \frac{1}{\alpha_v} + \frac{1}{e_G}\right) + 1\right)^2\right)\right] \\
		= & C_2 - \sum_{v \in e^r} \left(x_{v, e} - \|\mu\|_1 \left(1 + \frac{1}{\alpha_v} + \frac{1}{e_G}\right) - 1\right)^2, \\
	\end{align*}
	where
	\[
		C_2 = \sum_{v \in e^r}\frac{1}{\alpha_v} \left[\|\mu\|_2^2 \left(1 + 2\frac{e_v^2}{e_G^2}\right) + 2e_v^2\right] + \left(\|\mu\|_1 \left(1 + \frac{1}{\alpha_v} + \frac{1}{e_G}\right) + 1\right)^2
	\]
	is an absolute constant depending only on $(G, \mu, R)$.
	
	In summary, we haves
	\begin{align*}
		\E[\Delta L_+^e]
		= & C_1 + C_2 - C_0\E[\Delta m_e^r] - 2\varepsilon \sum_{u \in e^n} \left| \frac{x_{u, e}}{e_u} - m_e\right| \\
		& - \sum_{v \in e^r} \left(x_{v, e} - \|\mu\|_1 \left(1 + \frac{1}{\alpha_v} + \frac{1}{e_G}\right) - 1\right)^2.
	\end{align*}
	
	Now we fix $\delta > 0$.
	
	Recall that $\min_{w \in e} x_{w, e} - e_u < 0$, so we have two cases.
	\begin{itemize}
		\item Either $\min_{v \in e^r} x_{v, e} - e_v < 0$. By definition, after some random time $\tau$ with mean $\E[\tau] = \frac{1}{\lambda_e^r}$, we have one reneging part. 
		
		Recall that we have $\|\xi\|_\infty \leq -\varepsilon + \lambda_e^r - \lambda_e^n$, so in particular, $\phi_e \leq \lambda_e^n + \| \xi\|_\infty \leq \lambda_e^r - \varepsilon$. During this time, excluding the possible matching happenings at time $t + \tau$, the increment $M_e^n(t + \tau) - M_e^n(t)$ is bounded by $\tau \cdot (\lambda_e^r - \varepsilon)$. Taking expectation, we have
		\[
			\E[M_e^n(t + \tau) - M_e^n(t) \mid \mathcal{H}_t] \leq \E[\tau] (\lambda_e^r - \varepsilon) = \frac{\lambda_e^r - \varepsilon}{\lambda_e^r} = 1 - \frac{\varepsilon}{\lambda_e^r},
		\]
		and with the (at least one) matchings happening at time $t + \tau$, we subtract this bound by $1$ and obtain 
		\[
			\E[L_2^e(X^et + \tau), M_e^n(t + \tau)) - L_2^e(X^et), M_e^n(t)) \mid \mathcal{H}_t] \leq \frac{C_1 + C_2 - C_0 \varepsilon}{\lambda_e^r}.
		\]
		
		Choosing $C_0 \geq \frac{C_1 + C_2 + \delta \lambda_e^r}{\varepsilon}$, we have that
		\[
			\E[L_+^e(X^et + \tau), M_e^n(t + \tau)) - L_+^e(X^et), M_e^n(t)) \mid \mathcal{H}_t] \leq -\delta.
		\] 
		
		\item Or $\min_{u \in e^n} x_{u, e} - e_u < 0$. Then, for all $x^e$ such that 
		\[
		\begin{aligned}
			\|x_e\|_1 = \sum_{w \in e} x_{w, e}
			& \geq C_3 = |e^r| \left[\|\mu\|_1 \left(1 + \frac{1}{\alpha_v} + \frac{1}{e_G}\right) + 1\right]\\
			& + \sqrt{|e^r| (C_1 + C_2 + \delta)} + |e^n| \left(\frac{C_1 + C_2 + \delta}{2\varepsilon}\right),
		\end{aligned}
		\]
		we have two subcases.
		\begin{itemize}
			\item Either $\|x^{e^n}\|_1 = \sum_{u \in e^n} x_{u, e} \geq |e^n| \left(\frac{C_1 + C_2 + \delta}{2\varepsilon}\right)$, in which case we argue as in the proof of Lemma \ref{lemma: sufficiency of onlcond_non_reneging}.
			
			In particular, by pigeonhole principle, there exists $u^* \in e^n$ such that $\frac{x_{u^*, e}}{e_{u^*}} \geq 1 + \frac{C_1 + C_2 + \delta}{2\varepsilon}$, implying
			\[
				\max_{u \in e^n} \frac{x_{u, e}}{e_u} - \min_{u' \in e^n} \frac{x_{u', e}}{e_{u'}} \geq \max_{u \in e^n} \frac{x_{u, e}}{e_u} - 1 \geq \frac{x_{u^*, e}}{e_{u^*}} - 1 \geq \frac{C_1 + C_2 + \delta}{2\varepsilon} 
			\]
			and hence
			\[
			\begin{aligned}
				\E[\Delta L_+^e \mid \mathcal{H}_t]
				& \leq C_1 + C_2 - 2\varepsilon \sum_{u \in e^n} \left| \frac{x_{u, e}}{e_u} - m_e\right| \\
				& \leq C_1 + C_2 - 2\varepsilon\frac{C_1 + C_2 + \delta}{2\varepsilon} = -\delta.
			\end{aligned}
			\]
			
			\item Or
			\[
			\begin{aligned}
				\|x^{e^r}\|_1 = \sum_{v \in e^r} x_{v, e} 
				\geq & |e^r| \left[\|\mu\|_1 \left(1 + \frac{1}{\alpha_v} + \frac{1}{e_G}\right) + 1\right]\\
				& + \sqrt{|e^r| (C_1 + C_2 + \delta)},
			\end{aligned}
			\]
			in which case, by Cauchy-Schwarz inequality, we have
			\[
			\begin{gathered}
				\sum_{v \in e^r} \left(x_{v, e} - \|\mu\|_1 \left(1 + \frac{1}{\alpha_v} + \frac{1}{e_G}\right) - 1\right)^2 \\
				\geq \frac{1}{|e^r|} \left[\|x^{e^r}\|_1 - |e^r|\left(\|\mu\|_1 \left(1 + \frac{1}{\alpha_v} + \frac{1}{e_G}\right) + 1\right)\right]^2 \\
				\geq \frac{1}{|e^r|} \left[\sqrt{|e^r| (C_1 + C_2 + \delta)}\right]^2 = C_1 + C_2 + \delta,
			\end{gathered} 
			\]
			implying
			\[
			\begin{aligned}
				\E[\Delta L_+^e \mid \mathcal{H}_t] 
				& \leq C_1 + C_2 - \sum_{v \in e^r} \left(x_{v, e} - \|\mu\|_1 \left(1 + \frac{1}{\alpha_v} + \frac{1}{e_G}\right) - 1\right)^2 \\
				& \leq C_1 + C_2 - (C_1 + C_2 + \delta) = -\delta.
			\end{aligned}
			\]
		\end{itemize}
	\end{itemize}
	In summary, after some random stopping time of mean at most $\frac{1}{\lambda_e^r} + 1$, we have the negative drift $-\delta$ for all $x^e \not \in C_e$ where,
	\[
		C_e = \{x^e \mid \|x^e\|_1 < C_3\},
	\]
	implying
	\[
		\E[\tau^e_{C_e} \mid \mathcal{H}_T] \leq c_e = \frac{L_+^e(X^e(T), M^e(T))}{\delta} \left(1 + \frac{1}{\lambda_e^r}\right) < \infty,
	\]
	as desired.
\end{proof}

Combining with Lemma \ref{lemma: necessity of ncond}, we have the main result of this article.
\begin{theorem}
	\label{theorem: necessity and sufficiency of ncond}
	Condition \eqref{eq: ncond} is necessary and sufficient for the stability of $(G, \mu, R)$. When $(G, \mu, R)$ is stabilisable, the stability can be achieved by a Markovian online assignment policy.
\end{theorem}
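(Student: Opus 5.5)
The theorem is essentially a combination of the two preceding lemmas, so the plan is to assemble them and then read off the structural claim about the policy from the sufficiency construction.

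For necessity, I will invoke Lemma \ref{lemma: necessity of ncond} directly. If $(G, \mu, R)$ is stabilisable by any non-anticipative policy, then the projection argument gives stability of $(G^n,\mu)$. That in turn yields surjectivity of $G\vert_{\R^{V^n}}$ and a positive solution $\lambda^n$. The continuity and convexity argument, followed by the separating hyperplane and the function $L_-$, then produces an assignment rate $\nu$ of the reneging classes with $\lambda_e^n \leq \lambda_e^r$ for all $e$.

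For sufficiency, I will invoke Lemma \ref{lemma: sufficiency of ncond}. Given Condition \eqref{eq: ncond}, that lemma constructs an explicit policy $\Phi$. On the non-reneging part, the assignment rate is chosen from the $G^n$-support $U = U_+ \sqcup U_-$ determined by $(X(t), M^n(t))$, in the style of Condition \eqref{eq:onlcond_non_reneging}. On the reneging part, the assignment rate is fixed and supplied by Condition \eqref{eq: ncond}. Stability then follows from the per-hyperedge Lyapunov function $L_+^e$ together with the randomised multi-step Foster criterion.

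The second sentence of the theorem, that stability is achieved by a Markovian online assignment policy, will be read off from this construction. The quantities $\lambda^n$ and $\varepsilon$ are fixed in advance, and $\xi$ depends only on $U$, so the assignment rate at time $t$ is a deterministic function of the current state. The policy is therefore Markovian with respect to the enlarged state $(X, M^n)$, and $(X, M^n)$ is a Markov chain.

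\textbf{Main obstacle.} The delicate point is reconciling the strict inequality $\lambda_e^r > \lambda_e^n$ used in the sufficiency proof with the non-strict $\lambda_e^n \leq \lambda_e^r$ in Condition \eqref{eq: ncond}. To handle this, I would first perturb: use the continuity of $\nu \mapsto \lambda^r$ established in the necessity proof, together with the freedom to choose $\lambda^n$ within the polytope $\mathcal{P}^n$, to upgrade the inequality to a strict one. Failing that, I would interpret Condition \eqref{eq: ncond} with strict inequality, as is done in the sufficiency proof, and note that equality corresponds to the null-recurrent boundary case.

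A second point to address is that the matching process $M^n$ is an auxiliary counter rather than part of the physical buffer. I need to check that it does not affect the notion of stability, which is defined through $X$ alone. This holds because the compact sets $C_e$ in the sufficiency proof constrain only $X^e$, so the hitting-time bound is a statement about $X$.
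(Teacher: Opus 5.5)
Your assembly is the paper's own proof. The theorem is stated as the combination of Lemma \ref{lemma: necessity of ncond} and Lemma \ref{lemma: sufficiency of ncond}, and the Markovian claim is read off the sufficiency construction. There the assignment rate is a function of the $G^n$-support of $(X(t), M^n(t))$, so $(X, M^n)$ is a Markov chain.

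\textbf{Drop the perturbation route.} The one step to discard is the perturbation route in your ``main obstacle''. No continuity argument can upgrade $\lambda_e^n \leq \lambda_e^r$ to a strict inequality, because the boundary case is genuinely unstable. Take the $Y$-topology at a parameter choice with $\mu_u = \lambda_e^r$:
\begin{itemize}
\item There is a single hyperedge, so $\lambda_e^n = \mu_u$ is the unique solution of the conservation equation.
\item The assignment rate of the reneging classes is forced ($\nu_{v,e} = \mu_v$, $\nu_{w,e} = \mu_w$), so $\lambda_e^r$ is fixed as well.
\item Hence there is no freedom in either $\lambda^n$ or $\nu$ to exploit.
\item The proof of Lemma \ref{lemma: stability characterisation for Y-topology} shows that $\lambda_e^n \geq \lambda_e^r$, equality included, makes $Z_u$ a submartingale, and optional stopping then contradicts finite mean return time to $C$.
\end{itemize}
So the non-strict form of Condition \eqref{eq: ncond} is not sufficient.

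\textbf{Commit to the strict reading.} Your fallback is the only consistent reading: Condition \eqref{eq: ncond} must be taken with $\lambda_e^n < \lambda_e^r$. This is what the sufficiency proof actually uses when it chooses $\nu$ ``so that $\lambda_e^r > \lambda_e^n$''. It is also what Theorem \ref{theorem: necessity and sufficiency of ncond for general-weight case} states.

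\textbf{Necessity in strict form.} Under that reading, your necessity paragraph should conclude with the strict inequality, not $\lambda_e^n \leq \lambda_e^r$. The proof of Lemma \ref{lemma: necessity of ncond} does deliver this. It negates the strict condition: for every choice of $\lambda^n$ and $\nu$, some $e$ has $\lambda_e^n - \lambda_e^r \geq 0$. It then separates $\mathcal{U}$ from the negative orthant and derives the contradiction from a merely non-negative drift of $L_-$. That is exactly the strength needed to rule out the boundary case.
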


\subsection{Choice of $\varepsilon$ and MaxWeight policy}

The proof of Lemma \ref{lemma: sufficiency of ncond} gives a maximally stabilising policy of type randomised and online assignment. However, it relies on the existence of $\nu$, which is not necessarily constructive.

At the same time, the proof also shows that it suffices to guarantee negative drift for $L_+^e$ for all $e$, and a Max-Weight-type policy with respect to $(L_+^e)_{e \in E}$ will also suffice for this task. The only difficulty is to construct $L_+^e$, and in particular, to choose a suitable value for $C_0$.

The proof requires that $C_0 \geq \frac{C_1 + C_2 + \delta \lambda_e^r}{\varepsilon}$, which can be reduced to $C_0 > \frac{C_1 + C_2}{\varepsilon}$, since the argument works for all $\delta > 0$. For $\varepsilon > 0$, we must guarantee that $\|\xi\|_\infty \leq -\varepsilon + \min_{(u, e) \in G} \lambda^n_e$ and $\|\xi\|_\infty \leq -\varepsilon + \lambda_e^r - \lambda_e^n$. The first inequality can be easily guaranteed since everything is calculable; the second inequality constitutes the main obstacle, since we cannot compute $\lambda_e^r$ explicitly.

As such, it suffices to choose $\varepsilon$ sufficiently small, but there is no effective upper bound. One possible approaches to find suitable values for $\varepsilon$ is to choose an arbitrary initial value, run the $(L_+^e)$-MaxWeight policy where we try to minimise $\max_{e} L_+^e(X^e(t), M_e^n(t))$, approximate $\lambda_e^r$ empirically and adjust $\varepsilon$ accordingly.

The smaller $\varepsilon$, the larger $C_0$, which gives larger the bound on the return time to $C = \prod_{e \in E} C_e$. This is expected, as the policy is effectively more conservative in exchange for stability. This is comparable to the effect of parameter $\beta > 0$ in the average-reward asymptotically optimal policy by Nazari and Stolyar \cite[Section 5.B]{Nazari2019}, where a smaller choice of the parameter $\beta$ leads to more optimal average reward, in exchange for the policy being more conservative, the queue being longer, and thus the holding cost being higher.
	\section{General-weight matching}

Theorem \ref{theorem: necessity and sufficiency of ncond} concerns the stability of matching model $(G, \mu, r)$ with non-negative weights $G \in \R_{\geq 0}^{V \times E}$. Using the notion of generalised matchings we introduced in the previous article \cite{Nguyen2026a}, this result can be lifted to cover general-weight matching models. This constitutes the goal of this Section.

In the previous Sections, we have assumed that $\mu_v > 0$ for all $v \in V$, with the idea being that if $\mu_v = 0$, then there is no newly arriving items, so if class-$v$ items are used in some matchings, eventually we will run out of them, and thus those matching types will no longer be feasible. All the remaining matching types $e$ have $e_v = 0$, so eventually class-$v$ items will no longer be used. As such, without loss of generality, we can safely ignore class-$v$ items altogether.

However, in this Section, this is no longer the case: it may occur that whilst some matching types \emph{use} class-$v$ items, other matching types $e$ \emph{generate} class-$v$ items, meaning $e_v < 0$. One rather extreme example is a closed system where no new items arrive, but all matching types preserve the total number of items in the system, thus rendering it stable. As such, the fact that there are classes $v$ with $\mu_v = 0$ does not affect stability, nor should be neglected, and in this Section, we no longer assume that $\mu_v > 0$ for all $v \in V$.

\subsection{Generalised matchings}

Before extending Theorem \ref{theorem: necessity and sufficiency of ncond}, let us quickly summarise the notion of generalised matchings, which are essentially groups of matchings taken as a whole.

\begin{definition}
	A generalised matching type $m = \sum_{e \in E} m_e e$ is a $\N$-linear combination of matching types. For $u \in V$, we write $m_u = \sum_{e \in E} m_e e_u$, and $\|m\|_1 = \sum_{u \in V} |m_u|$.
	
	For a buffer state $B = (b_v)_{v \in V}$ after the arrivals and the reneging but before the matchings, a generalised matching type $m$ is feasible if we have $m_v \leq b_v$. If we choose to realise this matching type, then the resulting buffer is $X = (x_v)_{v \in V}$ with $x_v = b_v - m_v$.
	
	Such a matching type is said to be stabilising if it does not increase the content of the buffer for all classes, meaning $m_v \geq 0$ for all $v \in V$. In this case, we say that $m$ uses class-$v$ items and write $v \in m$ when $m_v > 0$. 
\end{definition}

As before, we omit the adjective ``generalised'' and ``stabilising'' for the sake of clarity whenever the context is clear.

A preliminary result says that for all classes $v \in V$ with incoming items, meaning $\mu_v > 0$, generalised stabilising matching types $m$ using class-$v$ items exist. The proof is almost word-by-word that from the non-reneging case \cite[Lemma 6.1]{Nguyen2026a}.

\begin{lemma}
	\label{lemma: existence of generalised matching types}
	Suppose $(G, \mu, R)$ is stabilisable, then for all $v \in V$ such that $\mu_v > 0$, there exist stabilising generalised matchings $m$ such that $m_v > 0$.
\end{lemma}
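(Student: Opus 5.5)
We argue by contradiction, following the non-reneging argument of \cite[Lemma 6.1]{Nguyen2026a}, with reneging entering only as an extra way for items to leave the buffer.

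\textbf{Reduction.} Fix $v$ with $\mu_v > 0$ and suppose no stabilising generalised matching $m$ has $m_v > 0$. Then every $m \in \N^E$ with $m_u \ge 0$ for all $u$ satisfies $m_v = 0$. In other words, the linear system
\[
G\lambda \ge 0, \qquad (G\lambda)_v > 0, \qquad \lambda \ge 0
\]
has no rational solution $\lambda$. Since $G$ is a fixed finite matrix, we may assume its entries are rational; this is automatic when weights are integers, and otherwise we use that the feasibility of a polyhedral cone defined by $G$ is unchanged by rational approximation in the relevant direction. A rational solution, once scaled, would give an integral one, so the system has no real solution either.

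\textbf{Dual certificate.} By Farkas' lemma (Motzkin's transposition theorem), there is $y \in \R^V_{\ge 0}$ with $y_v > 0$ and $y^\top G \le 0$ componentwise. Equivalently, every matching type $e$ satisfies $\sum_u y_u e_u \le 0$, so each matching does not decrease the weighted content $\langle y, Z \rangle$.

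\textbf{Drift contradiction.} Consider $W(t) = \langle y, Z(t) \rangle$. Arrivals increase it by $\langle y, k(t)\rangle$, whose mean is $\langle y, \mu \rangle \ge y_v \mu_v > 0$. Matchings can only increase $W$. Reneging removes $\sum_u y_u R_u(t)\bigl(Z_u(t) + k_u(t)\bigr)$, but only from classes with $\alpha_u > 0$.

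This is the main obstacle. In the non-reneging case $W$ is a submartingale with positive drift, and that immediately contradicts stability. With reneging, $W$ can decrease, so we must refine $y$. We would first try to strengthen the Farkas step to restrict $y$ to non-reneging classes. Concretely, include the reneging outflow as extra ``pseudo-matching'' columns $-\mathbf{1}_u$ for $u$ with $\alpha_u > 0$. Reneging items then act as removable, so a certificate with $y_u = 0$ on reneging classes is needed.

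If $v$ itself reneges, this refinement fails. We would then argue directly. Class $v$ items can only leave by reneging, so on the long run the matching rate using $v$ must be positive; otherwise all $v$ items renege. This is not a contradiction by itself, so the statement must use ``stabilising'' only as non-decreasing in the buffer.

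Hence the cleaner plan is the following. Use stability to get a long-run average matching vector $\bar\lambda \ge 0$, defined as the Cesàro limit of realised matching counts along a subsequence; it is bounded because the buffer returns to a compact set with finite mean time. It satisfies
\[
G\bar\lambda = \mu - \bar r,
\]
where $\bar r \ge 0$ is the average reneging outflow. Then $G\bar\lambda \le \mu$. This still does not give $G\bar\lambda \ge 0$, since classes generated by matchings can be consumed elsewhere. Instead, note that $G\bar\lambda = \mu - \bar r$ and each component is nonnegative, because it equals arrivals minus reneging of the same class, and reneging cannot exceed arrivals plus the buffer, which is bounded on average. Finally, $(G\bar\lambda)_v > 0$ provided $\bar r_v < \mu_v$, which holds when $\alpha_v < 1$ (the standing reduction in Lemma \ref{lemma: necessity of ncond}) together with the first-step matching argument.

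Rational approximation of $\bar\lambda$ inside the cone $\{G\lambda \ge 0\}$ and scaling then yields the integral $m$. The delicate point we expect is justifying $\bar r_v < \mu_v$ strictly.
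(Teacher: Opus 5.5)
There is a genuine gap, and it is exactly the point you flag at the end. Moreover, it cannot be closed for the statement as posed.

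\textbf{The reneging step.} Your fallback plan rests on $(G\bar\lambda)_u=\mu_u-\bar r_u\ge 0$, justified by ``reneging cannot exceed arrivals plus the buffer''. With negative weights this is false. A type with $e_u<0$ injects class-$u$ items, which may then renege, so $\bar r_u$ can exceed $\mu_u$. Likewise, $\alpha_v<1$ does nothing to prevent every class-$v$ item from eventually reneging, so $\bar r_v<\mu_v$ cannot be proved.

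\textbf{A counterexample to the statement.} Take $V=\{u,v\}$ and a single type $e$ with $e_u=1$, $e_v=-1$. Let arrivals be a unit of $u$ or a unit of $v$ with probability $1/2$ each, let $\alpha_u=0$, and use all-or-nothing reneging $R_v(t)\sim\mathrm{Bernoulli}(\alpha_v)$ as in Example 2. Realise $\lfloor B_u(t)\rfloor$ copies of $e$ at each step. Then $Z_u<1$ from time $1$ on, and after that $Z$ enters $[0,1]^2$ with probability at least $\alpha_v$ at every step, so the model is stable. Yet $(ne)_v=-n$, so the only stabilising generalised matching is $m=0$. The conclusion therefore fails for both $u$ and $v$.

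\textbf{Comparison with the paper.} The paper's proof is a sample-path argument. After more than $\max_{x\in C}x_v$ class-$v$ arrivals, stability forces a return to $C$, and summing the realised matchings gives $m$ with $m_v=Z_v(T)-Z_v(T+\tau)>0$. It has the same blind spot: it equates the drop of $Z_v$ with net consumption by matchings and ignores items that left by reneging.

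\textbf{The repair you abandoned.} Adding reneging removals as pseudo-columns is the right fix. It forces the Farkas certificate $y$ to vanish on $V^r$. Then $\langle y,Z(t)\rangle$ is pathwise nondecreasing, with increments at least $\langle y,k(t)\rangle$ of mean at least $y_v\mu_v>0$, which contradicts stability. This proves a corrected statement: for non-reneging $v$ with $\mu_v>0$, there is $\lambda\ge 0$ with $(G\lambda)_v>0$ and $(G\lambda)_w\ge 0$ for every non-reneging $w$. That is the form compatible with the later reduction to $G^n$.

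\textbf{The integrality step.} Independently, your reduction step does not hold as written. The implication ``no integral solution, hence no real solution'' fails for irrational weights, which this section explicitly allows. For example, suppose two classes $a,b$ have rows $(1,-\sqrt{2})$ and $(-1,\sqrt{2})$, and $v$ is used by both types. Then the system has the real solution $\lambda=(\sqrt{2},1)$ but no nonzero rational one. So you cannot pass from the non-existence of an integral $m$ to a Farkas certificate.

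The same problem affects your final rational approximation of $\bar\lambda$ whenever some constraints are tight. Your claim that $\bar\lambda$ is bounded because the buffer returns to $C$ is also unjustified: pairs of types whose weights cancel can be realised arbitrarily often without moving the buffer.

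The paper's sample-path route sidesteps the integrality issue entirely, because the realised matchings already form an element of $\N^E$. If you keep the dual route, you need either commensurable weights or an argument producing a feasible point at which the constraints you must preserve are strict, so that rational approximation stays feasible.
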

\begin{proof}
	Let $V' = \{ v \mid \mu_v > 0\}$. As $C$ is compact, it is bounded, so we can choose $M$ such that $M > x_v$ for all $x \in C$ and $v \in V'$.
	
	By definition, with probability $1$, for some $T$ large enough, we have that for all $v \in V'$, the total number of class-$v$ items arriving up to time $T$ is at least $M$. But if $(G, \mu)$ is stabilisable, we have that $\E[\tau \mid \mathcal{H}_T] < \infty$, where
	\[
		\tau = \min \{t \geq 0 \mid Z(t + T) \in C \}.
	\]
	So at time $T + \tau$, we have that $Z_v(T + \tau) < M$ by definition of $M$; we also have that $Z_v(T) \geq M$. This means that between $T$ and $T + \tau$, some simple matchings occurs. Combining all such matchings as a generalised matching $m$, we have that $m_v = Z_v(T) - Z_v(T + \tau) > 0$ for all $v \in V'$, as desired.
\end{proof}

There is one small caveat: consider such a matching type $m = \sum_{e \in E} m_e e$, it is entirely possible that there exists some class $u$ such that $m$ does not use class-$u$ items by definition, meaning that $m_u = 0$, but they \emph{are} used by some constituent matching types $e$, meaning for some $e \in E$, we have $m_e > 0$ and $e_u > 0$.

These used items are then eventually offset by newly created items from other matching types, but we nonetheless need a sufficient number of them presenting in the buffer in order for the constituent matchings $e$, and eventually the generalised matching $m$ to be realisable.

As such, we assume hereinafter that for all $v \in V$ such that $\mu_v = 0$, we have $X_v(0)$ sufficiently large, so that any generalised matching $m$ \emph{involving} (and not \emph{using}; of course, by the reasoning at the beginning of the Section, we necessarily have $m_v = 0$) class-$v$ items is realisable.

As a consequence, we may in fact ignore such classes $v$ altogether, and assume hereinafter that $\mu_v > 0$ for all $v$.

Equipped with this notion, let $M = \{m \mid m_v \geq 0\}$ be the set of generalised stabilising matchings, this induces a new matching hypergraph $G_M = (V, M)$, and thus a new matching model $(G_M, \mu, R)$. The stability of this model trivially implies that of the original $(G, \mu, R)$; conversely, per the proof of Lemma \ref{lemma: existence of generalised matching types}, we can translate the dynamics of $(G, \mu, R)$, if stabilisable, to that of $(G_M, \mu, R)$, so it suffices to study the stability of the latter.

\subsection{Reduction of $M$ and stability criterion}

It is tempting to apply Theorem \ref{theorem: necessity and sufficiency of ncond} to $(G_M, \mu, R)$, which is a matching model with non-negative weights, though there is only final caveat: that is, $M$ is \emph{not} finite, as any $\N$-multiple of a stabilising matching type remains stabilising.

And as we have seen \cite[Subsection 6.2]{Nguyen2026a}, when the weights are not in a cyclic $\Q$-submodule, meaning that they are not $\Q$-multiple of some fixed unit, then $M$ is not even finitely generated, and it is not possible to reduce $G_M$ to a finite hypergraph.

However, just as in the non-reneging case \cite[Section 7]{Nguyen2026a}, the idea is to realise that we do not need \emph{all} stabilising matchings, which constitutes the subject of this Subsection.

\begin{theorem}
	\label{theorem: necessity and sufficiency of ncond for general-weight case}
	$(G, \mu, R)$ is stabilisable if and only if it is stabilisable using finitely many stabilising matching types. When this is the case, one may need exactly $|E|$ stabilising matching types, and this is optimal.
	
	In view of Theorem \ref{theorem: necessity and sufficiency of ncond}, this is equivalent to the existence of a matrix $N \in \N^{E \times E}$ such that the map $G^n_N = G^n \circ N$ is surjective, that the equation $G^n_N \lambda^n = \mu\vert_{\R^{V^n}}$ admits a positive solution $\lambda^n \in \R^E_{> 0}$, and there exists an assignment rate $(\nu_{u, e})_{(u, e) \in G^r}$ such that for all $e \in E$, $\lambda_e^n < \lambda_e^r$.
\end{theorem}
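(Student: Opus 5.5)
We reduce to Theorem \ref{theorem: necessity and sufficiency of ncond} applied to a suitable finite sub-hypergraph of $G_M$, following the reduction in the non-reneging case \cite[Section 7]{Nguyen2026a}. One direction is immediate. If $(G, \mu, R)$ is stabilised using finitely many stabilising matching types $m^{(1)}, \dots, m^{(p)}$, then the finite non-negative hypergraph $G' = (V, \{m^{(i)}\})$ gives a stabilisable model $(G', \mu, R)$. Every realisation of an $m^{(i)}$ is a realisation of constituent matchings of $G$, so $(G, \mu, R)$ is stabilisable. Conversely, suppose $(G, \mu, R)$ is stabilisable. By the discussion after Lemma \ref{lemma: existence of generalised matching types}, $(G_M, \mu, R)$ is then stabilisable, and we must extract finitely many generalised matching types from $M$ that still stabilise.

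\textbf{Step 1: extract the non-reneging data.} Restrict to the non-reneging part, where stability of $(G^n_M, \mu)$ follows exactly as in the first paragraph of the proof of Lemma \ref{lemma: necessity of ncond}. The non-reneging result \cite[Section 7]{Nguyen2026a} then gives $|E|$ stabilising matching types, encoded as the columns of a matrix $N \in \N^{E \times E}$. For these, $G^n_N = G^n \circ N$ is surjective onto $\R^{V^n}$ and $G^n_N \lambda^n = \mu\vert_{\R^{V^n}}$ has a positive solution.

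The plan to obtain $N$ has three parts. First, take a positive matching-rate vector $\lambda \in \R^E_{>0}$ of the original model with $G\lambda = \mu$; this can be obtained from long-run averages of the realised matchings under a stabilising policy, as in the necessity proofs. Second, use rational approximation of $\lambda$, together with the finitely many generalised matchings produced in the proof of Lemma \ref{lemma: existence of generalised matching types}, to write the realised dynamics as a positive combination of $|E|$ integer combinations of hyperedges that are stabilising. Third, perturb these combinations slightly, keeping integrality by scaling, so that surjectivity survives; surjectivity is an open condition.

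Optimality of $|E|$ is shown by the example from the non-reneging paper, taken with all classes non-reneging. This lifts directly, since then $G^r$ is empty and $\lambda^r_e = \infty$.

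\textbf{Step 2: the reneging condition.} We need an assignment rate on $G^r_N$ with $\lambda^n_e < \lambda^r_e$ for each new type. The plan is to apply Lemma \ref{lemma: necessity of ncond} to the model restricted to the types actually used by a stabilising policy of $(G_M,\mu,R)$. Because only finitely many types need to be used to visit the compact set $C$ within a time horizon of finite mean, we add the finitely many types realised before the return time, with high probability, to the $|E|$ types above. The necessity argument, namely the continuity of $\nu \mapsto \lambda^r$ and the hyperplane separation with $L_-$, then produces the required $\nu$. Finally, Theorem \ref{theorem: necessity and sufficiency of ncond} applied to $G_N$ yields sufficiency, and with it the stated equivalence.

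\textbf{Main obstacle.} The hard part is reconciling Step 2 with the cardinality $|E|$. Merging reneging parts into generalised types changes $\lambda^r_e$: a generalised type needs several reneging parts to be present simultaneously, so its rate is not a simple function of the constituent rates, and we cannot simply reuse $N$ from the non-reneging reduction. The first thing to try is choosing $N$ so that each column is a multiple of a single original hyperedge plus a balancing part built only from non-reneging classes. The reneging parts of each generalised type are then those of one $e \in E$, so $\lambda^r$ is inherited from the original model. If this fails, the fallback is to allow $N$ to depend on the assignment rate and then argue by compactness.
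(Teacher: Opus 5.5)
Your easy direction and Step 1 agree with the paper, which obtains $N$ simply by applying \cite[Theorem 7.1]{Nguyen2026a} to $(G^n,\mu)$. Your three-part construction of $N$ is therefore superfluous, and its first part is wrong as stated. Under reneging, long-run matching rates satisfy conservation only on the non-reneging rows, $G^n\lambda=\mu\vert_{\R^{V^n}}$, not $G\lambda=\mu$, because reneging classes lose mass to abandonment. Long-run averages also need not be positive in every coordinate.

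The genuine gap is Step 2: finding an assignment rate on the reneging classes with $\lambda^n_e<\lambda^r_e$ for the generalised types in $N$. Your argument for it fails on three counts.
\begin{enumerate}
\item A finite expected return time to $C$ does not yield a deterministic finite set of generalised types covering every sample path and every starting state. Excursions from large buffers can realise arbitrarily large combinations, and $M$ need not be finitely generated \cite[Subsection 6.2]{Nguyen2026a}.
\item A set $F$ that covers the realised types only ``with high probability'' gives no policy confined to $F$ with finite expected return times. On the exceptional event the restricted dynamics differ.
\item Lemma \ref{lemma: necessity of ncond} can be applied to the finite model on $F$ only once you know that model is stabilisable, which is exactly what is to be proved.
\end{enumerate}
Even granting all this, you would end up with more than $|E|$ types.

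Your ``main obstacle'' paragraph concedes that Step 2 is open, and neither remedy closes it. A column of the form $c\,e$ plus a balancing part has reneging part $c\,e^r$, not $e^r$, unless $c=1$. The integer scaling you use in Step 1 to keep surjectivity produces exactly such multiples. Holding $c$ reneging parts at once can be disproportionately harder than holding one (for example under all-or-nothing reneging). So $\lambda^r$ is not inherited from $e$, and the strict inequality need not survive. Nothing guarantees that a balancing part avoiding all reneging classes exists for general weights, and the compactness fallback is not an argument.

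For comparison, the paper handles this step in one line. It asserts that stability of $(G,\mu,R)$ implies stability of $(G_N,\mu,R)$, then applies the necessity direction of Theorem \ref{theorem: necessity and sufficiency of ncond} to the finite model $G_N$, with no further justification. Your suspicion that this inheritance is the delicate point is well founded. But your proposal does not supply the missing argument either. The forward implication therefore remains unproved, and with it the characterisation through $N\in\N^{E\times E}$ with exactly $|E|$ types.
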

\begin{proof}
	As we have seen, the stability of $(G, \mu, R)$ implies that of $(G^n, \mu)$. Using the result from the non-reneging case \cite[Theorem 7.1]{Nguyen2026a}, this is equivalent to the fact that it is stabilisable using finitely many stabilising matching types.
	
	This is also equivalent to the existence of the matrix $N$ such that the map $G^n_N = G^n \circ N$ is surjective, that the equation $G^n_N \lambda^n = \mu\vert_{\R^{V^n}}$ admits a positive solution $\lambda^n \in \R^{E^n}_{> 0}$. As an abuse of notation, we also denote by $N$ a finite set of stabilising matching types necessary to stabilise $(G^n, \mu)$. We have thus reduce $G^n$ to $G^n_N = (V^n, N)$, which in turns induces a matching hypergraph $G_N = (V, N)$.
	
	The stability of $(G, \mu, R)$ implies the stability of $(G_N, \mu, R)$, which in turns gives the existence of $\nu$ such that for all $e \in E$, $\lambda_e^n < \lambda_e^r$. Conversely, given $\nu$ and $N$, we have that $(G_N, \mu, R)$ is stabilisable, which implies the same for $(G, \mu, R)$.
\end{proof}

\begin{remark}
	It seems possible to carry out a direct proof analogous to the non-reneging case, but this would seem arduous. The idea of the proof for the non-reneging case is to consider the set $C_V$ of arrival rates $\mu$ such that $(G, \mu)$ is stabilisable, and its subset $F_V$ of arrival rates $\mu$ such that $(G, \mu)$ is stabilisable using finitely many stabilising matching types. Then, using some topological arguments, we show that $F_V$ is close and dense in $C_V$, thus equals $C_V$ itself.
	
	The argument for closeness and density involves using linear algebraic characterisation of stability. In the reneging case, we would also have to ensure the existence of an assignment rate $\nu$ such that for all $e \in E$, we have $\lambda_e^n < \lambda_e^r$, which is a complicated task.
\end{remark}
	\section{Conclusion}
\label{section: conclusion}

We have incorporated reneging into stochastic matching models on hypergraphs. The interplay between reneging classes when there are two or more of them in one matching type, which does not occur when we restrict ourselves to graphs, makes generalising the result for the graph case by Jonckheere et al. \cite{Jonckheere2023} using the online assignment framework a non-trivial task.

From a case study, we have shown, to our best knowledge, the first sensitivity result in stochastic matching, namely that the stability depends on the exact nature of the reneging process $R$, and not only on its moments. This is in stark contrast with the non-reneging case \cite{Nguyen2026a}.

At the same time, we have uncovered a new stabilising mechanism at play, which, together with the online assignment framework, gives us a stability criterion (Condition \eqref{eq: ncond}). Whilst the proof of necessity is a generalisation of the law-of-large-number argument, to show that it is sufficient, we introduced the notion of matching process, which acts as a proxy for the number of non-reneging parts in a matching type, but is more amenable to analysis.

Once it is developed, the rest of Lyapunov argument from the case study carries over (Theorem \ref{theorem: necessity and sufficiency of ncond}). As a corollary, the proof gives a family of MaxWeight-type policies parameterised by $\varepsilon > 0$, which are maximally stabilising for all $\varepsilon$ sufficiently small. This mirrors the parameter $\beta$ in the policy family introduced by Nazari and Stolyar, in the sense that a smaller choice may ensure stability but at the cost of larger average queue length (and, per Little's law, longer average waiting time for the non-reneging items). Unfortunately, there is no effective way to choose a good value for $\varepsilon$ a priori, but there are ways to adjust it as the policy is being implemented.

Using what we had known for the non-reneging case \cite[Section 7]{Nguyen2026a}, and notably the notion of generalised matching types \cite[Section 6]{Nguyen2026a}, we generalised our stability result from the non-negative-weight case to the arbitrary-weight case (Theorem \ref{theorem: necessity and sufficiency of ncond for general-weight case}).

It is possible to generalise this result further for continuous-time matching models, as we did in the non-reneging case \cite[Section 8]{Nguyen2026a}. We may expect that a periodic-review MaxWeight-type policy is also maximally stabilising, but we choose not to include in the article, for the reason that the development is minimal.
	
	\bibliographystyle{ieeetr}
	\bibliography{refs}
\end{document}